\definecolor{darkgreen}{rgb}{0,0.6,0}
\newtheorem{problem}{Problem}
\newtheorem{theorem}{Theorem}
\newtheorem{corollary}[theorem]{Corollary}
\newtheorem{proposition}[theorem]{Proposition}
\newtheorem{remark}{Remark}
\definecolor{note}{rgb}{0.1,0.1,1}
\definecolor{rephase}{rgb}{0.15,0.7,0.15}
\definecolor{bag}{rgb}{0.6,0.6,0.2}
\renewcommand*\env@matrix[1][c]{\hskip -\arraycolsep
  \let\@ifnextchar\new@ifnextchar
  \array{*\c@MaxMatrixCols #1}}
\newcommand{\mathleft}{\@fleqntrue\@mathmargin0pt}
\newcommand{\mathcenter}{\@fleqnfalse}
\definecolor{orange}{RGB}{255,127,0}
\title{\LARGE \bf
Learning Hybrid Dynamics via Convex Optimizations
}
\author{Kaito Iwasaki, Sangli Teng, Anthony Bloch, Maani Ghaffari
\thanks{Kaito Iwasaki, Sangli Teng, Anthony Bloch, Maani Ghaffari are with the University of Michigan, Ann Arbor, MI 48109, USA. \texttt{\{kaitoi, sanglit, abloch, maanigj\}@umich.edu}.
}
}
\begin{document}

\maketitle
\thispagestyle{empty}
\pagestyle{empty}

\begin{abstract}
This paper investigates the problem of identifying state-dependent switching systems, a class of hybrid dynamical systems that combine multiple linear or nonlinear modes. We propose two broad classes of switching systems: switching linear systems (SLSs) and switching polynomial systems (SPSs). We first formulate the joint estimation of the mode dynamics and switching rules as a mixed integer program. To solve its inherent scalability issue, we develop a hierarchy of convex relaxations and establish a bound and conditions under which these relaxations are tight. Building on these results, we propose a bilevel convex optimization framework that alternates between mode assignment and dynamics estimation, and we recover switching boundaries using margin-based polynomial classifiers. Numerical experiments on both linear and nonlinear oscillators demonstrate that the method accurately identifies mode dynamics and reconstructs switching surfaces from trajectory data. Our results provide a tractable optimization-based framework for switching system identification.
\end{abstract}

\IEEEpeerreviewmaketitle

\section{Introduction}
Switching systems constitute a class of hybrid dynamical systems that evolve according to one of several constituent vector fields, with a discrete mode selecting which dynamics are active at a given state, time, and/or input. This representation captures behaviors that are hard to describe with a single smooth model while preserving interpretability through simple mode-specific dynamics and explicit switching logics. Applications span robotics and autonomous driving, power and energy systems, and biological system.

The switching systems identification problem comprises:
\begin{enumerate}
\item \emph{Model estimation}: learn the parameters of each mode-specific model;
\item \emph{Switching logic estimation}: determine which mode is active at each state and time. 
\end{enumerate}
Each subproblem can be considered a classical system identification problem. For example, if the logic is known, the problem collapses to classical identification per mode. However, if both are unknown, conventional techniques may not be directly applicable due to their coupling, hence requiring a new approach.

\subsection{Related work}
There are several surveys \cite{GARULLI2012344, MORADVANDI202395} and background texts \cite{Liberzon2003swsystems, VidalBook2016, LauerBloch2019} on switching systems.
Research on switching system identification spans both \emph{modeling choices} and \emph{algorithmic paradigms}. On the modeling side, works differ by (i) the submodel class such as input-output (e.g., ARX \cite{DuClustering} or BJ \cite{PigaBayesian}), or state-space (linear \cite{SefidmazgiSMC2016}, affine \cite{DuTCSI2020}, nonlinear \cite{PaolettiEJC2007}); and (ii) how the switching rule is represented (state-/input-dependent partitions \cite{mavridis2024real}, time/event schedules \cite{Xiang2021}, or stochastic rules \cite{LIU20211136}).

Algorithmically, there are many variants loosely classified into the following four families. (1) \emph{Optimization based methods} aim for  a global fit with unknown mode labels such as mixed-integer program \cite{RollBemporadLjung2004} and greedy partitioning \cite{BemporadGreedyHSCC2003}. A substantial body of work within this category targets piecewise affine and piecewise ARX models, where the regressor space is partitioned into polyhedra with affine or ARX submodels. For instance, \cite{FerrariTrecateAutomatica2003} assumes a known number of subsystems and combine clustering, regression, and classification. (2) \emph{Algebraic approaches} transform SARX models into lifted linear relations that eliminate explicit dependence on the switching sequence. The connection between algebraic subspace clustering and ARX identification was first highlighted in \cite{VidalPhD2003}, with further development in recursive formulations in \cite{VidalAutomatica2008}. (3) Clustering/EM-type and prototype-based schemes use unsupervised assignments interleaved with per-mode estimation \cite{FerrariTrecateAutomatica2003,DuTCSI2020}. (4) \emph{Probabilistic formulations} treat the mode as latent, enabling uncertainty quantification and automatic model-order control \cite{PigaBayesian2020a}. Many methods appear in both offline and online/recursive forms, e.g., \cite{mavridis2024real}.

Within this landscape, our current work follows the state-space, state-dependent model and optimization-based algorithm (global fit with unknown modes), emphasizing tractable relaxations of mode constraints, and recovering switching boundaries with a margin-based polynomial nonlinear classifier, rather than fixing the linear or polyhedral partitions a prioi.

\section{Switching systems}\label{sec: switching system}
In this section, we formalize switching linear and polynomial systems, where the dynamics depend on the active mode selected by state-triggered switching. We first review the general notion of switching systems, then specialize to linear and polynomial vector fields.
\subsection{Switching Systems}
A \emph{switching system} is a dynamical system that evolves according to one of several vector fields, with the active mode changing based on state, time, or input conditions. Formally,
\begin{equation}\label{eq: switching system}
\dot{x}=X_q(x), \quad q \in \mathcal{M}=\left\{1, \ldots, M\right\},
\end{equation}
where each mode $q$ is associated with a smooth vector field $X_q$. Switching can be classified as state-dependent or time-dependent, and by more complex rules (e.g., hysteresis). In this paper we focus on state-dependent switching.

More explicitly, let $Q$ be the state space. Each mode $q$ is assigned a domain $C_q\subseteq Q$ where its dynamics evolve, and a guard set $D_q\subseteq Q$ where transitions may occur. Thus, the system can be written as
\begin{equation}
    \mathcal{S W}: \begin{cases}\dot{x}=X_q(x), & x \in C_q \\ q^{+} \in \mathcal{M}, & x \in D_q\end{cases},
\end{equation}
meaning the state flows under $X_q$ until reaching $D_q$, upon which the mode switches to some $q^+$. We assume non-Zeno behavior (nonzero time between switches). Switching systems can be viewed as a subclass of hybrid systems, where mode transitions may also include discrete state resets (see \cite{Goebel2008invariance,Liberzon2003swsystems,Goebel2009hybrid} for details).

\subsection{Switching Linear Systems in \texorpdfstring{$\mathbb{R}^n$}{TEXT}}
A \emph{switching linear system} (SLS) is a state-dependent switching system in which each mode is governed by a linear vector field. Let the mode set be $\mathcal{M}=\{1,2, \ldots, M\}$ and associate to each mode $j \in \mathcal{M}$ a constant system matrix $A_j \in \mathbb{R}^{n \times n}$. The continuous state $z(t) \in \mathbb{R}^n$ evolves according to
\begin{equation}\label{eq: SLS}
\dot{z}(t) = A_{\sigma(z(t))} z(t),
\end{equation}
where $\sigma: \mathbb{R}^n \rightarrow \mathcal{M}$ is the \emph{switching signal} specifying the active mode at time $t$. In the state-dependent setting considered here, the mode $\sigma(z(t))$ is determined by the position of $z(t)$ relative to a collection of \emph{switching surfaces} in the state space. These surfaces partition $\mathbb{R}^n$ into regions, each corresponding to a distinct mode.

A convenient way to express the mode assignment is through binary indicator variables 
\begin{equation}\label{eq: binary constraint}
    \lambda_j(z(t)) \in\{0,1\}, \quad \sum_{j=1}^M \lambda_j(z(t))=1,
\end{equation}
so that the dynamics \eqref{eq: SLS} can be written as
\begin{equation}\label{eq: SLS convex}
\dot{z}(t) = \sum_{j=1}^M \lambda_j(z(t)) A_j z(t).
\end{equation}
We remark that for our purposes, we later relax the discrete constraints on the $\lambda_j$ to allow continuous values in the interval $[0,1]$, while maintaining the simplex constraint, i.e.,
\begin{equation}\label{eq: lp mode constraint}
\lambda_j(z(t)) \in[0,1], \quad \sum_{j=1}^M \lambda_j(z(t))=1
\end{equation}
This convex relaxation enables the development of tractable optimization-based identification algorithms, which we explore in detail later. Note that in order to fully impose the binary constraint $\lambda_j \in \{0, 1\}$, we require in addition to \eqref{eq: lp mode constraint} a quadratic constraint on $\lambda_j$
\begin{equation}\label{eq: quadratic constraint}
    \lambda_j(z(t))(\lambda_j(z(t)) - 1) = 0.
\end{equation}
\subsection{Switching Polynomial Systems in \texorpdfstring{$\mathbb{R}^n$}{TEXT}}
A \emph{switching polynomial system} (SPS) is a state-dependent switching system in which each mode is governed by a polynomial vector field. Let the mode set be the same as SLS and associate to each mode $j \in \mathcal{M}$ a polynomial vector field $H_j: \mathbb{R}^n \rightarrow \mathbb{R}^n$. The state $z(t) \in \mathbb{R}^n$ evolves according to
\begin{equation}\label{eq: SPS}
\dot{z}(t) = H_{\sigma(z(t))}(z(t)),
\end{equation}
where the active mode $j=\sigma(z(t))$ is determined by the location of $z(t)$ relative to a set of switching surfaces, as in the linear case.

Each vector field can be expressed component-wise as $H_j(z)=(h_{j1}(z), \ldots, h_{jn}(z))^T$, where $h_{j k}(z) \in \mathbb{R}\left[z_1, \ldots, z_n\right]$ so that every coordinate function is a polynomial in the state variables, where we write the state vector as $z = (z_1, \cdots, z_n)^T$. 

Following the formulation for SLS, we introduce binary mode indicators as in \eqref{eq: binary constraint}. Again, as in SLS, we relax the binary constraints to \eqref{eq: lp mode constraint} enabling a convex representation of the mode combination. 
\section{Switching system identification as MIP}
Informally, the main problem we address in this paper is a method of identifying a switching system from data. At a high level, this switching system identification can be cast as a constrained minimization problem. 
\begin{problem}\label{prob: MIP}
    Suppose we have a set of measurements $\{(z^i,\dot z^{\,i})\}_{i=1}^N$ from a switching system with associated $\lambda$ and $X$. Consider a cost $c(\lambda, X| z^i, \dot z^i) \ge 0$ such that $c(\lambda, X| z^i, \dot z^i) = 0$ if $\dot z = \sum_{j=1}^N \lambda_j(z^i) X_j(z^i)$. We seek mode  assignments $\lambda^i := (\lambda_1(z^i), \ldots, \lambda_M(z^i)) \in \{0,1\}^M$ and mode dynamics $\{X_j\}_{j=1}^M$ that minimize the overall cost:
\begin{equation}\label{eq: MIP_form}
\begin{aligned}
    \min_{\lambda^i, X_j} \quad &
    \sum_{i=1}^N c(\lambda, X| z^i, \dot z^i) \\
    \text{s.t.}\quad & \lambda^i \in \{0,1\}^M, \quad \mathbf{1}^T \lambda^i = 1, \ \forall i
\end{aligned}
\end{equation}
\end{problem}

The main difficulty in solving this problem is that \eqref{eq: MIP_form} is a mixed integer program (MIP). Although it captures the identification task precisely, it is not scalable to a large data set as the search space grows exponentially as $N$ increases.

This motivates our study of the next step: \emph{convex relaxation}. By relaxing the binary constraints on $\lambda^i$ into convex constraints, we obtain tractable  convex optimization problems that approximate \eqref{eq: MIP_form}. We describe several methods of doing this in Section \ref{sec: convex relaxation} and give a new formulation of Problem \ref{prob: MIP} in Section \ref{sec: ss identification bilevel}.

\section{Convex relaxation of mode constraints}\label{sec: convex relaxation}
In preparation for reformulating Problem \ref{prob: MIP}, we formulate the mode selection rules into a convex optimization friendly form. We first introduce a method which is to replace the quadratic integrality conditions in \eqref{eq: quadratic constraint} with their lifted semidefinite counterparts, yielding a convex relaxation of the integer constraint. (e.g., via the Shor or moment relaxation). While such relaxations can offer tight bounds and, in some cases, exact recovery, their computational cost scales rapidly with the number of data points. To preserve scalability, we subsequently introduce another method which imposes only the linear simplex constraints in \eqref{eq: lp mode constraint}, which are subsequently "hardened" to satisfy the original single active mode constraint \eqref{eq: binary constraint}.
\subsection{Semidefinite relaxation}
Observe the mixed integer constraint \eqref{eq: binary constraint} is equivalent to 
\begin{equation}\label{eq: quad constraint}
    \begin{aligned}
        \lambda_j(1-\lambda_j) = 0 \quad \forall j, \quad \text{s.t.} \quad \mathbf{1}^T\lambda = 1,
    \end{aligned}
\end{equation}
which fits nicely into the moment relaxation framework. We will not go over the full details of the general theory of moment relaxation, but rather directly apply it to our special case of mode constraints. For readers interested, see \cite{Lasserre2001Moments}, \cite{Parrilo2003SDP}.

In the moment framework, we treat $\lambda$ as a polynomial variable and feasibility is enforced via a truncated moment sequence $y=\left(y_\alpha\right)_{|\alpha| \leq 2 r}$ indexed by monomials in $\lambda$.
The order-$r$ relaxation imposes
\begin{equation}\label{eq: moment condition}
\begin{aligned}
& M_r(y) \succeq 0, \\
& M_{r-1}((\lambda_j^2 - \lambda_j)y) = 0, \quad \forall j,\\
& M_{r-1}((\mathbf{1}^T \lambda - 1)y) = 0,
\end{aligned}
\end{equation}
where $M_r(y)$ is the moment matrix and $M_{r-d}(g y)$ the localizing matrix associated with constraint polynomial $g$.
When an objective is specified (e.g., from the system identification loss in our case), it is incorporated as some linear functional $L_y(\cdot)$ of the moment sequence.
For $r=1$, the moment conditions in \eqref{eq: moment condition} reduces to
\begin{equation}
    \begin{bmatrix}
    1 & \lambda^T \\
    \lambda & \Lambda
    \end{bmatrix}\succeq 0, \quad \Lambda_{j j}=\lambda_j, \quad \mathbf{1}^T \lambda=1
\end{equation}
where we used $\lambda$ to denote the first order moments $y_j$ for the purpose explained in Theorem \ref{thm: flat-extension}, and $\Lambda_{ij} = y_{ij}$. Note that this corresponds to the Shor's relaxation \cite{Shor1987Quadratic}. 

We call the order-$r$ moment relaxation \emph{tight} if there exists an optimal moment sequence $y^{\star}$ from which one can extract an optimal feasible $\lambda^{\star}$ for the original problem. the Lasserre hierarchy provides a nested sequence of such relaxations with nondecreasing optimal values, where $r=1$ coincides with the Shor PSD relaxation and higher orders yield tighter approximations \cite{Lasserre2001Moments}, \cite{Parrilo2003SDP}. To certify tightness at a fixed order, we invoke the Curto–Fialkow flat-extension criterion: a rank condition on the moment matrices forces $y^{\star}$ to be a Dirac measure on a canonical basis vector.
\begin{theorem}[Flat extension theorem \cite{CurtoFialkow1996}]\label{thm: flat-extension}
If for some $r \ge r_0$,
\begin{equation}\label{eq: rank condition}
\operatorname{rank} M_r(y^\star) = \operatorname{rank} M_{r-1}(y^\star) = 1,
\end{equation}
then $y^\star$ admits a representing measure supported at a single point $\lambda^\star \in \mathbb{R}^M$. By feasibility of the mode constraints, $\lambda^\star$ is a canonical basis vector, and the relaxation is tight.
\end{theorem}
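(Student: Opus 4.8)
The plan is to treat the Curto--Fialkow flat-extension theorem as a black box and combine it with the explicit algebraic form of the mode constraints to identify the extracted point. Here $r_0$ only needs to be large enough that the localizing matrices of the degree-two constraint polynomials appearing in \eqref{eq: moment condition} are well defined (e.g. $r_0 = 1$ suffices), so I would first note that the statement is non-vacuous.

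The first step is to show that the rank condition forces $y^\star$ to be the moment sequence of a single Dirac mass. By the flat-extension criterion, $\operatorname{rank} M_r(y^\star) = \operatorname{rank} M_{r-1}(y^\star)$ implies that $y^\star$ admits a finitely atomic representing measure $\mu$ with exactly $\operatorname{rank} M_r(y^\star)$ atoms; since this rank equals $1$, $\mu = \delta_{\lambda^\star}$ for a single point $\lambda^\star \in \mathbb{R}^M$. One can also see this directly: a rank-one $M_r(y^\star) \succeq 0$ factors as $v v^{T}$ with $v$ indexed by monomials of degree at most $r$, the normalization $y^\star_{\mathbf 0} = 1$ forces $v_{\mathbf 0} = 1$, and matching entries via the Hankel structure of the moment matrix gives $y^\star_\alpha = \prod_{j=1}^{M}(y^\star_{e_j})^{\alpha_j}$, so that $y^\star$ is exactly the truncated moment sequence of $\delta_{\lambda^\star}$ with $\lambda^\star := (y^\star_{e_1}, \ldots, y^\star_{e_M})$.

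Next I would use the localizing equality constraints to pin $\lambda^\star$ to a vertex of the probability simplex. The relations $M_{r-1}((\lambda_j^2 - \lambda_j) y^\star) = 0$ and $M_{r-1}((\mathbf{1}^{T}\lambda - 1) y^\star) = 0$ say that the polynomials $\lambda_j^2 - \lambda_j$ and $\mathbf{1}^{T}\lambda - 1$ vanish $\mu$-almost everywhere; because $\mu$ is a single atom, reading off the $(\mathbf 0,\mathbf 0)$ entry of each localizing matrix already yields $(\lambda^\star_j)^2 = \lambda^\star_j$ for every $j$ and $\sum_{j=1}^{M}\lambda^\star_j = 1$. Hence $\lambda^\star \in \{0,1\}^M$ with exactly one nonzero coordinate, i.e. $\lambda^\star$ is a standard basis vector $e_k$, which is feasible for \eqref{eq: binary constraint}. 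For tightness I would then observe that the relaxed objective is the linear functional $L_{y^\star}(\cdot)$ evaluated at the lifted identification loss, so that, $y^\star$ being the moments of $\delta_{\lambda^\star}$, its value equals the original loss at the integer-feasible point $\lambda^\star$; since the moment relaxation lower-bounds the optimum of \eqref{eq: MIP_form} while $\lambda^\star$ is feasible there, the two optimal values coincide and $\lambda^\star$ is an optimal mode assignment, which is exactly the claimed tightness.

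I expect the main obstacle to be the flat-extension theorem itself, which is the substantive input and which I would cite rather than reprove. Granting it, the only points requiring attention are the passage from the matrix identities $M_{r-1}(g y^\star) = 0$ to the pointwise identities $g(\lambda^\star) = 0$, which collapses to inspecting a single entry because $\mu$ has only one atom, and the routine check that the identification loss used later in the paper is linear in the moment variables, so that its relaxed value indeed coincides with its value at the extracted atom.
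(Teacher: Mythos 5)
Your proposal is correct and follows essentially the same route as the paper's proof: invoke the Curto--Fialkow flat-extension criterion to obtain a one-atom representing measure $\delta_{\lambda^\star}$, then use the localizing constraints $\lambda_j^2-\lambda_j=0$ and $\mathbf{1}^T\lambda=1$ to force $\lambda^\star$ to be a canonical basis vector. You supply more detail than the paper at two points---the rank-one factorization reading of $M_r(y^\star)=vv^{T}$ and the explicit sandwich argument showing the relaxed optimal value equals the integer optimum---but these only flesh out steps the paper's proof states or leaves implicit, so the argument is the same in substance.
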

\begin{proof}
By the Curto–Fialkow flat-extension theore, \eqref{eq: rank condition} implies that $y^\star$ has a representing measure with exactly one atom \cite{CurtoFialkow1996} \cite{CurtoFialkow1998}. Hence, $y^\star_\alpha = (\lambda^\star)^\alpha$ for some $\lambda^\star$. The constraints $\lambda_j^2-\lambda_j=0$ and $\mathbf{1}^T\lambda=1$ force $\lambda^\star \in \{e_1,\ldots,e_M\}$. See also \cite{Lasserre2001Moments} for the connection to moment relaxations of polynomial optimization.
\end{proof}
\begin{corollary}[Mode recovery from first-order moments]
Under the conditions of Theorem~\ref{thm: flat-extension},
\begin{equation}\label{eq: first order moment}
\lambda_j^\star = y_{e_j}^\star, 
\quad
M_1(y^\star) =
\begin{bmatrix}
1 & (\lambda^\star)^T\\[0.2em]
\lambda^\star & \lambda^\star (\lambda^\star)^T
\end{bmatrix}.
\end{equation}
Moreover, tightness is equivalent to $\operatorname{rank} M_r(y^\star) = 1$.
\end{corollary}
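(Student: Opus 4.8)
The plan is to read off the corollary directly from the structure already established in Theorem~\ref{thm: flat-extension}, using the fact that under the rank condition the optimal moment sequence is the moment sequence of a single Dirac measure at a canonical basis vector $\lambda^\star = e_k$ for some $k \in \mathcal{M}$. First I would recall that for a Dirac measure $\delta_{\lambda^\star}$ we have $y_\alpha^\star = (\lambda^\star)^\alpha$ for every multi-index $\alpha$; specializing to the first-order monomials $\alpha = e_j$ gives $\lambda_j^\star = y_{e_j}^\star$, which is the first displayed identity. This is essentially just unwinding the definition of the localizing/moment indexing and the statement ``represented by a single atom'' from the theorem.

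Next I would assemble the first-order moment matrix $M_1(y^\star)$ blockwise. Its $(0,0)$ entry is $y_0^\star = 1$ (normalization of the measure), its $(0,j)$ and $(j,0)$ entries are the first-order moments $y_{e_j}^\star = \lambda_j^\star$, and its $(i,j)$ block entries are the second-order moments $y_{e_i+e_j}^\star = (\lambda^\star)_i (\lambda^\star)_j$, i.e. the $(i,j)$ entry of the rank-one matrix $\lambda^\star(\lambda^\star)^T$. Stacking these gives exactly the claimed $2\times 2$ block form in \eqref{eq: first order moment}. Since $\lambda^\star$ is a canonical basis vector, one can optionally note that $\lambda^\star(\lambda^\star)^T = \operatorname{diag}(\lambda^\star)$ as well, consistent with the $r=1$ moment conditions $\Lambda_{jj} = \lambda_j$ displayed earlier, though that extra identity is not needed for the statement.

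Finally, for the equivalence ``tightness $\iff \operatorname{rank} M_r(y^\star) = 1$'': the forward direction is immediate because a rank-one $M_1(y^\star)$ of the form above forces $\lambda^\star$ to be extreme in the simplex (combining $\operatorname{rank} = 1$ with the simplex constraint $\mathbf{1}^T\lambda^\star = 1$ and the diagonal identity $\Lambda_{jj} = \lambda_j$ yields $\lambda_j^\star \in \{0,1\}$), and conversely if the relaxation is tight then $y^\star$ is (or can be taken to be) the moment sequence of $\delta_{e_k}$, whose moment matrix is the outer product of the monomial-evaluation vector with itself, hence rank one; invoking Theorem~\ref{thm: flat-extension} with $r_0 = 1$ closes the loop since rank one at level $r$ forces rank one at level $r-1$. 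The main subtlety — and the only place one must be slightly careful — is the ``can be taken to be'' clause: tightness as defined in the paper only requires \emph{some} extractable optimal $\lambda^\star$, so strictly speaking one should argue that whenever an integral optimum exists the corresponding atomic moment sequence is itself optimal for the relaxation (it is feasible and attains the same objective), so rank one is achieved at the optimum; I would state this briefly rather than belabor it. No deep estimate is involved — the content is entirely bookkeeping of moment indices plus the already-proved flat-extension fact.
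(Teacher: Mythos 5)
Your proposal is correct and follows essentially the same route as the paper's proof: read off $y_\alpha^\star=(\lambda^\star)^\alpha$ from the Dirac representing measure, assemble $M_1(y^\star)$ as the outer product $\begin{bmatrix}1\\ \lambda^\star\end{bmatrix}\begin{bmatrix}1\\ \lambda^\star\end{bmatrix}^{T}$, and argue both directions of the rank-one equivalence. Your closing remark --- that for the converse one should check the atomic moment sequence induced by a feasible integral optimum is itself optimal for the relaxation --- is a small point of care that the paper's own proof glosses over, but it does not change the argument.
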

\begin{proof}
From the theorem, $y^\star$ is generated by the Dirac measure $\delta_{\lambda^\star}$, hence $y_\alpha^\star=(\lambda^\star)^\alpha$ and $\lambda_j^\star=y_{e_j}^\star$. Consequently,
\[
M_1(y^\star)
= \int \begin{bmatrix}1\\ \lambda\end{bmatrix}\begin{bmatrix}1\\ \lambda\end{bmatrix}^{T}d\delta_{\lambda^\star}
= \begin{bmatrix}1\\ \lambda^\star\end{bmatrix}\begin{bmatrix}1\\ \lambda^\star\end{bmatrix}^T,
\]
which yields the right hand side of \eqref{eq: first order moment} and is rank-one. Conversely, if the relaxation is tight and returns a feasible $\lambda^\star$, the induced moment sequence from $\delta_{\lambda^\star}$ yields $\operatorname{rank} M_r(y^\star)=1$.
\end{proof}

\subsection{Further relaxation to simplex constraints}\label{subsec: LP-relax}
While the SDP relaxation encodes the quadratic integrality condition, its scalability is still limited for large datasets. As a scalable alternative, we drop integrality and keep only the linear simplex constraint
\begin{equation}
\Delta:=\{\lambda\in\mathbb{R}^M:\ \lambda\ge 0,\ \mathbf{1}^T\lambda=1\}.
\end{equation}
For a sample $(x,\dot x)$, write $v_j:=X_j(x)$. For the cost $c$ we choose a generic convex positively homogeneous loss $\mathcal{L}:\mathbb{R}^n\to\mathbb{R}_+$ (e.g., any $\ell_p$-norm):
\begin{equation}
    c(\lambda, X|x,\dot x) := \ \mathcal{L}\left(\dot x-\sum_{j=1}^{M}\lambda_j v_j\right).
\end{equation}
Then, the relaxed mode selection problem is
\begin{equation}\label{eq:generic-relax}
\min_{\lambda\in\Delta}\ \mathcal{L}\left(\dot x-\sum_{j=1}^M\lambda_j v_j\right).
\end{equation}
\begin{remark}
    We will specialize to $\mathcal{L}(r)=\|r\|_p$ (in particular, $p = 1,2$) in Section \ref{sec: ss identification bilevel}, which yields a LP and a SDP.
\end{remark}

We state the dual certificate for a one-hot optimum in the following. Here \emph{one-hot} means $\lambda^\star=e_{j^\star}$ (a vector with a single 1 and all other entries 0). Let $\mathcal{L}^\ast$ be the dual (dual norm if $\mathcal{L}$ is a norm), and recall the representation
$\mathcal{L}(r)=\sup_{\mathcal{L}^\ast(w)\le 1}\langle w,r\rangle$.
\begin{theorem}[One–hot optimality under a dual certificate]
If there exist $j^\star$ and $w$ with $\mathcal{L}^\ast(w)\le 1$ such that
$w\in \partial \mathcal{L}(\dot x-v_{j^\star})$ and
\[
\langle w,v_{j^\star}\rangle\ >\ \max_{j\ne j^\star}\langle w,v_j\rangle,
\]
then the unique minimizer of \eqref{eq:generic-relax} is $\lambda^\star=e_{j^\star}$.
\end{theorem}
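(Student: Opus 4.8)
The plan is to certify optimality of $e_{j^\star}$ by exhibiting an affine minorant of the objective in \eqref{eq:generic-relax} that is tight at $\lambda^\star=e_{j^\star}$ and whose minimum over the simplex $\Delta$ is attained only there. Write $r(\lambda):=\dot x-\sum_{j=1}^M\lambda_j v_j$, so that the objective is $f(\lambda)=\mathcal{L}(r(\lambda))$ and $r(\cdot)$ is affine in $\lambda$. All of the work is packaged into a single convexity inequality supported at $\lambda^\star$, followed by a simplex estimate.

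First I would extract two consequences of $w\in\partial\mathcal{L}(\dot x-v_{j^\star})$. Evaluating the subgradient inequality $\mathcal{L}(y)\ge\mathcal{L}(r)+\langle w,y-r\rangle$ (with $r:=\dot x-v_{j^\star}$) at $y=0$ and at $y=2r$ and using positive homogeneity of $\mathcal{L}$ gives the Euler-type identity $\langle w,\dot x-v_{j^\star}\rangle=\mathcal{L}(\dot x-v_{j^\star})=f(e_{j^\star})$; evaluating it at an arbitrary $y$ gives the global bound $\langle w,y\rangle\le\mathcal{L}(y)$, i.e.\ $\mathcal{L}^\ast(w)\le 1$ (so the dual-norm hypothesis is automatically consistent with $w$ being a subgradient). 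Combining, for every $\lambda\in\Delta$,
\[
f(\lambda)=\mathcal{L}(r(\lambda))\ \ge\ \langle w,r(\lambda)\rangle\ =\ \langle w,\dot x\rangle-\sum_{j=1}^M\lambda_j\langle w,v_j\rangle .
\]

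Next I would invoke the simplex structure: since $\lambda\ge 0$ and $\mathbf{1}^T\lambda=1$, the weighted average $\sum_j\lambda_j\langle w,v_j\rangle$ is at most $\max_j\langle w,v_j\rangle$, which by the strict-separation hypothesis equals $\langle w,v_{j^\star}\rangle$. Substituting yields $f(\lambda)\ge\langle w,\dot x-v_{j^\star}\rangle=f(e_{j^\star})$, so $e_{j^\star}$ is a minimizer. For uniqueness, suppose $f(\lambda)=f(e_{j^\star})$ for some $\lambda\in\Delta$; then every inequality in the chain is an equality, in particular $\sum_j\lambda_j\langle w,v_j\rangle=\langle w,v_{j^\star}\rangle=\max_j\langle w,v_j\rangle$. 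Because $\langle w,v_{j^\star}\rangle>\langle w,v_j\rangle$ strictly for $j\ne j^\star$, each weight $\lambda_j$ with $j\ne j^\star$ must vanish, forcing $\lambda=e_{j^\star}$.

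I expect the only delicate point to be the homogeneity/subgradient bookkeeping in the second step — namely a clean justification of $\langle w,\dot x-v_{j^\star}\rangle=\mathcal{L}(\dot x-v_{j^\star})$, including the degenerate case $\dot x=v_{j^\star}$ where both sides are $0$, and of the identification of $\mathcal{L}^\ast(w)\le 1$ with $w\in\partial\mathcal{L}(0)$. Once that identity is in hand, the remainder is a one-line convexity-plus-simplex estimate together with a tracing of equality cases, and no further obstacle is anticipated.
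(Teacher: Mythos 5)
Your proof is correct, and it is a more careful and self-contained execution of the same dual-certificate idea that the paper only sketches. The paper's proof invokes the full minimax exchange $\min_{\lambda\in\Delta}\mathcal{L}(\dot x-\sum_j\lambda_j v_j)=\sup_{\mathcal{L}^\ast(w)\le 1}(\langle w,\dot x\rangle-\max_j\langle w,v_j\rangle)$ and then asserts that ``KKT conditions force $\lambda^\star=e_{j^\star}$'' without detail. You instead use only the easy (weak-duality) direction: the subgradient inequality at $\dot x-v_{j^\star}$ gives the global minorant $\langle w,\cdot\rangle\le\mathcal{L}(\cdot)$, positive homogeneity gives the Euler identity $\langle w,\dot x-v_{j^\star}\rangle=\mathcal{L}(\dot x-v_{j^\star})$ that makes the bound tight at $e_{j^\star}$, and the simplex maximization of the linear functional $\lambda\mapsto\sum_j\lambda_j\langle w,v_j\rangle$ does the rest. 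This buys two things the paper's version does not deliver explicitly: (i) no appeal to a minimax theorem or to KKT multipliers is needed, so the argument works verbatim for any convex positively homogeneous $\mathcal{L}$ without worrying about constraint qualifications; and (ii) the uniqueness claim is actually proved, by tracing the equality case through the strict gap $\langle w,v_{j^\star}\rangle>\langle w,v_j\rangle$, whereas the paper leaves this implicit. Your subgradient bookkeeping (evaluating at $y=0$ and $y=2r$, and the degenerate case $r=0$ where $\partial\mathcal{L}(0)$ is the whole dual ball) is sound. The only cosmetic remark is that your observation that $w\in\partial\mathcal{L}(\dot x-v_{j^\star})$ already implies $\mathcal{L}^\ast(w)\le 1$ shows the theorem's hypothesis $\mathcal{L}^\ast(w)\le 1$ is redundant; that is consistent with, not in conflict with, the statement.
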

\begin{proof}
Using $\mathcal{L}(r)=\sup_{\mathcal{L}^\ast(w)\le 1}\langle w,r\rangle$ and the minimax theorem,
\[
\min_{\lambda\in\Delta}\mathcal{L}\big(\dot x-\sum_j\lambda_j v_j\big)
=\sup_{\mathcal{L}^\ast(w)\le 1}\ \left(\langle w,\dot x\rangle-\max_j\langle w,v_j\rangle\right).
\]
If $w\in\partial \mathcal{L}(\dot x-v_{j^\star})$ and $j^\star$ uniquely maximizes $\langle w,v_j\rangle$, KKT conditions force $\lambda^\star=e_{j^\star}$.
\end{proof}
\begin{theorem}
    Let $v_i\neq v_j$ for $i\neq j$. If $\dot x$ is drawn from a distribution absolutely continuous w.r.t. Lebesgue measure and $\mathcal{L}$ is a norm, then
\begin{equation}
\mathbb{P}\left(\exists\,i\neq j:\ \mathcal{L}(\dot x-v_i)=\mathcal{L}(\dot x-v_j)\right)=0,
\end{equation}
so $j^\star\in\arg\min_j \mathcal{L}(\dot x-v_j)$ is unique a.s., and any optimizer of \eqref{eq:generic-relax} is one–hot.
\end{theorem}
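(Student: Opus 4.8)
The plan is to reduce the claim to a Lebesgue-null estimate for each pairwise ``bisector'' and then assemble the conclusion by a union bound. First I would fix a pair $i\neq j$ and study $B_{ij}:=\{r\in\mathbb{R}^n:\ \mathcal{L}(r-v_i)=\mathcal{L}(r-v_j)\}$. If $\operatorname{Leb}(B_{ij})=0$ for every such pair, then $\bigcup_{i\neq j}B_{ij}$ is a finite union of null sets, hence null, and since the law of $\dot x$ is absolutely continuous with respect to Lebesgue measure, $\mathbb{P}\big(\dot x\in\bigcup_{i\neq j}B_{ij}\big)=0$. On the complementary full-probability event the numbers $\mathcal{L}(\dot x-v_1),\dots,\mathcal{L}(\dot x-v_M)$ are pairwise distinct, so $j^\star\in\arg\min_j\mathcal{L}(\dot x-v_j)$ is a singleton; feeding this unique $j^\star$ into the preceding one-hot optimality theorem --- one exhibits a subgradient $w\in\partial\mathcal{L}(\dot x-v_{j^\star})$ with $\mathcal{L}^\ast(w)\le 1$ that strictly separates $v_{j^\star}$ from the remaining $v_j$ --- then forces the minimizer of \eqref{eq:generic-relax} to be $e_{j^\star}$.

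The heart of the argument is showing $\operatorname{Leb}(B_{ij})=0$. I would foliate $\mathbb{R}^n$ by lines in the direction $w:=v_i-v_j\neq 0$ and show $B_{ij}$ meets each such line in at most one point; a closed set that intersects every line of a foliation in a single point is the graph of a function over a transversal hyperplane, hence Lebesgue-null (each line-slice is a point). Along a line $r=r_0+tw$ set $h(t):=\mathcal{L}(a+tw)-\mathcal{L}(a+(t+1)w)$ with $a:=r_0-v_i$, using $r_0-v_j=a+w$. Each of $t\mapsto\mathcal{L}(a+tw)$ and $t\mapsto\mathcal{L}(a+(t+1)w)$ is convex; writing $\psi$ for the (nondecreasing) right derivative of the former, the right derivative of $h$ is $\psi(t)-\psi(t+1)\le 0$, so $h$ is nonincreasing. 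Asymptotically $h(t)\to+\mathcal{L}(w)$ as $t\to-\infty$ and $h(t)\to-\mathcal{L}(w)$ as $t\to+\infty$, so $h$ changes sign. When $\mathcal{L}$ is \emph{strictly} convex, $t\mapsto\mathcal{L}(a+tw)$ is strictly convex off the single line through $v_i$ and $v_j$ (a segment in it would force $\mathcal{L}$ additive on non-proportional vectors), hence $\psi$ is strictly increasing, $h$ is strictly decreasing, and has exactly one zero; on the exceptional line $h(t)=(\,|t+1|-|t|\,)\,\mathcal{L}(w)$, again with a single zero.

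The main obstacle is precisely the strict-convexity step: if the unit ball of $\mathcal{L}$ has flat faces, then $t\mapsto\mathcal{L}(a+tw)$ can be affine on an interval, $h$ is then only nonincreasing and may vanish on a whole interval, so $B_{ij}$ can contain segments in direction $w$ and even be full-dimensional --- this already happens for $\ell_1$ and $\ell_\infty$. So the statement should be read with $\mathcal{L}$ strictly convex (e.g.\ any $\ell_p$ with $1<p<\infty$, in particular $p=2$ as used in Section~\ref{sec: ss identification bilevel}); for the piecewise-linear losses one can at best recover a weaker version under a genericity/transversality assumption on the $v_j$. Two minor points remain: for non-differentiable strictly convex norms the separation argument in the reduction must be phrased with subgradients rather than gradients, and one should check that the dual certificate can be chosen so the one-hot conclusion holds on the full probability-one event --- both are routine once strict convexity is assumed.
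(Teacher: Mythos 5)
Your proposal is correct where it applies and takes a genuinely different --- and considerably more careful --- route than the paper. The paper dispatches the bisector $B_{ij}=\{r:\ \mathcal{L}(r-v_i)=\mathcal{L}(r-v_j)\}$ in one line, asserting it is ``a finite union of affine hyperplanes as it is the zero level of a nonconstant Lipschitz function.'' That inference is not valid: zero level sets of nonconstant Lipschitz functions need not have measure zero, and for polyhedral norms the difference $\mathcal{L}(r-v_i)-\mathcal{L}(r-v_j)$ is piecewise affine but can vanish identically on a full-dimensional cell. Your argument --- foliate by lines in the direction $w=v_i-v_j$, note that $h(t)=\mathcal{L}(a+tw)-\mathcal{L}(a+(t+1)w)$ has nonincreasing right derivative $\psi(t)-\psi(t+1)\le 0$ by monotonicity of the right derivative of a convex function, upgrade to strictly decreasing under strict convexity (handling the collinear line separately), conclude each line meets $B_{ij}$ in at most one point, and finish with Fubini, the union bound, and absolute continuity --- is a complete and correct proof for strictly convex $\mathcal{L}$. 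Since $B_{ij}$ is closed, measurability of the slices is not an issue, and the asymptotics of $h$ are not even needed for the measure-zero conclusion.

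More importantly, your observation that the theorem is \emph{false as stated} for general norms is correct and substantive, not a mere caveat. For $\mathcal{L}=\|\cdot\|_1$ in $\mathbb{R}^2$ with $v_i=(0,0)$ and $v_j=(2,2)$, the entire quadrant $\{r:\ r_1\ge 2,\ r_2\le 0\}$ lies in the bisector (both norms equal $r_1-r_2$ there), so the tie probability is strictly positive for any density with full support; the same happens for $\ell_\infty$. Since the paper explicitly specializes to $p=1$ in Section~\ref{sec: ss identification bilevel}, this is not a pathological corner case: the theorem needs either a strict-convexity hypothesis on $\mathcal{L}$ (any $\ell_p$ with $1<p<\infty$) or a genericity condition on the differences $v_i-v_j$ excluding the directions along which a polyhedral bisector fattens, exactly as you propose. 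Your reduction to the dual-certificate theorem for the one-hot conclusion on the full-probability event is also the right way to close the loop, and is more explicit than what the paper offers.
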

\begin{proof}
    For each pair $(i, j)$, the equality $\mathcal{L}(\dot x-v_i)=\mathcal{L}(\dot x-v_j)$ holds only on a finite union of affine hyperplanes as it is the zero level of a nonconstant Lipschitz function. Hence, this set of such tie points is measure zero. The finite union over all such pairs $(i, j)$ still remains measure zero.
\end{proof}
The above results ensure that, under generic conditions, the optimizers $\lambda^{\star}$ are almost always binary. In practice, however, we may lack exact knowledge of $X_j$ or have noisy measurements of $(x, \dot{x})$, leading to fractional "soft" solutions $\lambda^\star$. A common post-processing step is to "harden" such solutions by setting the largest component $\lambda_{j^{\star}}^{\star}$ to 1 and all the others to 0, thereby recovering the binary constraint \eqref{eq: binary constraint}. While this rounding is certainly not guaranteed to preserve optimality, the next result shows that it is safe: the increase in residual error is bounded explicitly in terms of how concentrated $\hat{\lambda}$ is and how far the mode vectors $v_j$ are from the selected mode $v_{j^{\star}}$.
\begin{proposition}
For any $\hat\lambda\in\Delta$ and any $j^\star\in\arg\max_j \hat\lambda_j$,
\begin{equation}\label{eq: any-norm-rounding}
\mathcal{L}(\dot x-v_{j^\star}) \le \mathcal{L}\left(\dot x-\sum_{j=1}^M\hat\lambda_j v_j\right) +\sum_{j=1}^M\hat\lambda_j\mathcal{L}\left(v_j-v_{j^\star}\right),
\end{equation}
Thus, rounding a soft solution to the largest component increases the loss by at most the (mode–weighted) dispersion around $v_{j^\star}$.
\end{proposition}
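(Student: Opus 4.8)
The plan is to obtain \eqref{eq: any-norm-rounding} directly from two structural properties of the loss, namely subadditivity (the triangle inequality) and positive homogeneity, both of which are consequences of $\mathcal{L}$ being convex and positively homogeneous. First I would record the elementary fact that a convex, positively homogeneous $\mathcal{L}:\mathbb{R}^n\to\mathbb{R}_+$ is sublinear, i.e. $\mathcal{L}(a+b)\le\mathcal{L}(a)+\mathcal{L}(b)$ and $\mathcal{L}(ta)=t\mathcal{L}(a)$ for $t\ge 0$. Homogeneity is assumed outright, and subadditivity follows since $\mathcal{L}(a+b)=2\,\mathcal{L}\!\left(\tfrac12 a+\tfrac12 b\right)\le 2\left(\tfrac12\mathcal{L}(a)+\tfrac12\mathcal{L}(b)\right)=\mathcal{L}(a)+\mathcal{L}(b)$ by convexity and homogeneity.

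The key algebraic step exploits the simplex constraint $\mathbf{1}^{T}\hat\lambda=1$: since $\sum_{j}\hat\lambda_j v_{j^\star}=v_{j^\star}$, we may split
\[
\dot x-v_{j^\star}=\left(\dot x-\sum_{j=1}^M\hat\lambda_j v_j\right)+\sum_{j=1}^M\hat\lambda_j\,(v_j-v_{j^\star}).
\]
Applying subadditivity across this splitting bounds $\mathcal{L}(\dot x-v_{j^\star})$ by $\mathcal{L}\!\left(\dot x-\sum_j\hat\lambda_j v_j\right)+\mathcal{L}\!\left(\sum_j\hat\lambda_j(v_j-v_{j^\star})\right)$. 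I would then apply subadditivity once more across the $M$ summands of the second term and use positive homogeneity with the nonnegative weights $\hat\lambda_j\ge 0$ to get $\mathcal{L}\!\left(\sum_j\hat\lambda_j(v_j-v_{j^\star})\right)\le\sum_j\hat\lambda_j\,\mathcal{L}(v_j-v_{j^\star})$. Chaining the two estimates gives \eqref{eq: any-norm-rounding}, noting that the $j=j^\star$ contribution $\hat\lambda_{j^\star}\mathcal{L}(v_{j^\star}-v_{j^\star})=0$ vanishes so the dispersion sum effectively runs over $j\ne j^\star$.

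I do not expect a genuine obstacle; the only care needed is to justify sublinearity from the stated hypotheses rather than silently assuming $\mathcal{L}$ is a norm. I would also add the remark that the inequality in fact holds for \emph{every} index $j^\star$, not merely a maximizer of $\hat\lambda$: the role of $j^\star\in\arg\max_j\hat\lambda_j$ is only to make the right-hand side small, since choosing the largest component concentrates the mass $\hat\lambda_{j^\star}$ and thereby minimizes the total weight $1-\hat\lambda_{j^\star}=\sum_{j\ne j^\star}\hat\lambda_j$ carried by the off-mode dispersion terms, which is what makes the rounding bound informative in practice.
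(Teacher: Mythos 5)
Your proof is correct and follows essentially the same route as the paper: the identical decomposition $\dot x-v_{j^\star}=\bigl(\dot x-\sum_j\hat\lambda_j v_j\bigr)+\sum_j\hat\lambda_j(v_j-v_{j^\star})$, which uses $\mathbf{1}^T\hat\lambda=1$, followed by the triangle inequality. Your additional step of deriving sublinearity from convexity plus positive homogeneity (rather than assuming $\mathcal{L}$ is a norm) is a worthwhile clarification the paper leaves implicit, but the argument is the same.
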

\begin{proof}
    Apply the triangle inequality to
    \[
        \dot{x}-v_{j^{\star}}=\left(\dot{x}-\sum_{j=1}^{M} \hat{\lambda}_j v_j\right)+\sum_{j=1}^{M} \hat{\lambda}_j\left(v_j-v_{j^{\star}}\right). \qedhere
    \]
\end{proof}

In summary, the simplex relaxation \eqref{eq:generic-relax} preserves discrete behavior under mild certificates and is generically one–hot. When it is not, the hardening error is explicitly controlled by \eqref{eq: any-norm-rounding}. In switching system identification, we are interested in recovering the vector fields and switching rules simultaneously. In the next section, we outline the sequential convex optimizations similar to \cite{ACC2025} to find exact structure of the certain classes of switching systems.
\section{Bilevel convex optimizations}\label{sec: ss identification bilevel}
We return to Problem \ref{prob: MIP} and reformulate the MIP into \emph{bilevel convex optimizations} using the relaxation in Section \ref{sec: convex relaxation}. Due to page constraints, we present the case where we use the first-order moment relaxation. The higher-order cases and the linear simplex relaxation work analogously.
\subsection{Problem formulation}
We consider \emph{linear-in-parameters} models $F_j(z)=C_j \Phi(z)$ with a feature map $\Phi:\mathbb{R}^n\to\mathbb{R}^P$:
\begin{equation}\label{eq:lin_in_params}
    F_j(z) = C_j \Phi(z), \quad C_j \in \mathbb{R}^{n \times P}.
\end{equation}
In particular, in this work we consider the two important specializations introduced in Section \ref{sec: switching system}, i.e.,
\begin{itemize}
\item \textbf{SLS}: $\Phi_{\text{lin}}(z)=z$ and $C_j=A_j\in\mathbb{R}^{n\times n}$;
\item \textbf{SPS}: $\Phi_{\text{poly}}(z)=\phi_d(z)$ is a vector of monomials up to degree $d$, and each $C_j \in \mathbb{R}^{n\times P}$ is a matrix of polynomial coefficients.
\end{itemize}
Given the sampled data, our goal is to estimate $\{C_j\}_{j=1}^M$ and $\{\lambda^i\}_{i=1}^N$ by solving the $p$-norm minimization problem:
\begin{problem}\label{prob: ell1 norm SDP}
We minimize the discrepancy between the measured time derivative and the fitted switching system
\begin{equation}\label{eq: 1-norn with sdp constraints}
    \begin{aligned}
    \min_{\lambda^i,\Lambda^i,C_j}  
    &\  \sum_{i=1}^N  \left\|\dot{z}_i - \sum_{j=1}^M \lambda_j^i  F_j(z^i) \right\|_p \\
    \mathrm{s.t.}\quad &\ \lambda_j^i(1-\lambda_j^i) = 0, \quad \mathbf{1}^T\lambda^i = 1, \ \forall i, j,\\
    & \quad-\eta \le C_j \le \eta, \quad \forall j,
    \end{aligned}
\end{equation}
where $\eta \in \mathbb{R}$ defines a box constraint to ensure the feasible set is compact.
\end{problem}

Note that the $\ell_p$-cost in Problem \ref{prob: ell1 norm SDP} is nonconvex as it involves cross terms between decision variables $\lambda^i$ and $C_j$. There are several ways to handle this problem (e.g., for $p =2$, we can apply the moment relaxation to the corresponding polynomial optimization problem and turn it into a SDP). Our approach instead is to use $\ell_1$-norm, and turn it into an alternating subproblems by fixing either $\lambda^i$ or $C_j$ and introducing slack variables.
\subsection{Bilevel convex program}
We minimize the $\ell_1$ data mismatch and alternate between mode assignment and dynamics estimation. Introduce slack variables $\delta_k^i \ge 0$ for $k=1,\dots,n$. Let us denote $c^i_k(C,\lambda) := [\dot z^{i} - \sum_{j=1}^M \lambda_j^i C_j \Phi(z^i)]_k$. Note that when either $C$ or $\lambda$ is fixed, this will be linear in the free variables. We first fix the coefficients $C_j$ of the vector fields. We then apply the order-1 moment relaxation and solve the following SDP:
\begin{problem}[Mode assignment as SDP]\label{prob: mode search SDP}
Given fixed $\{C_j\}_{j=1}^M$, solve for $\{\lambda^i,\Lambda^i\}_{i=1}^N$
\begin{equation}\label{eq:mode_SDP_general}
\begin{aligned}
\min_{\lambda^i,\Lambda^i,\delta^i}\quad & \sum_{i=1}^N\sum_{k=1}^n \delta_k^i \\
\text{s.t.}\quad
& -\delta_k^i \le c^i_k(C,\lambda) \le \delta^i_k, \quad \delta^i_k \ge 0, \\
&\begin{bmatrix}
1 & (\lambda^i)^T \\
\lambda^i & \Lambda^i
\end{bmatrix} \succeq 0, \quad \operatorname{diag}(\Lambda^i) = \lambda^i,\quad \mathbf{1}^T \lambda^i = 1.
\end{aligned}
\end{equation}
\end{problem}
This is the order-$1$ moment (Shor) relaxation of the mixed-integer constraints on $\lambda^i$. In practice, we recover the mode either from $\lambda^i$ directly or by hardening to $e_{j^\star}$ with $j^\star \in \arg\max_j \lambda_j^i$. Note that for relaxations of order $r > 1$, the inequality constraints for the slack variables will become LMIs for the corresponding localizing matrices similar to \eqref{eq: moment condition}. With these fixed $\lambda^i$, we solve the following LP:
\begin{problem}[Dynamics estimation as LP]
    Given fixed $\{\lambda^i\}_{i=1}^N$, solve for $\{C_j\}_{j=1}^M$
\begin{equation}\label{eq:dynamics_LP_general}
\begin{aligned}
\min_{C_j,\delta^i}\ & \sum_{i=1}^N \sum_{k=1}^n \delta_k^i\\
\text{s.t.}\ 
&-\delta_k^i \le c^i_k, \le \delta^i_k,\ \delta^i_k \ge 0, \ -\eta \le (C_j)_{\ell m} \le \eta.
\end{aligned}
\end{equation}
\end{problem}
We alternate these subproblems until convergence of the mismatch. We make the following observation. Let
\begin{equation}
F(C,\lambda) := \sum_{i=1}^N \left\|\dot z^{\,i} - \sum_{j=1}^M \lambda_j^i C_j \Phi(z^i)\right\|_1,
\end{equation}
with block variables $C := \{C_j\}_{j=1}^M$ and the collection of first-order moments $\lambda := \{\lambda^i\}_{i=1}^N$. 
Although the moment relaxation introduces auxiliary second-order matrices $\Lambda^i$, the objective depends only on $\lambda^i$. The feasible sets are
\begin{equation}
\begin{aligned}
\mathcal{C} &:= \Big\{ C : -\eta \le (C_j)_{\ell m} \le \eta \ \ \forall j,\ell,m \Big\}, \quad \mathcal{M} := \prod_{i=1}^N \mathcal{M}_i,
\end{aligned}
\end{equation}
where each per-sample feasible set $\mathcal{M}_i$ is a collection of pairs $(\lambda^i, \Lambda^i)$ satisfying the constraints in \eqref{eq: 1-norn with sdp constraints}. Each block subproblem is convex. For fixed $C$, minimizing $F(C,\lambda)$ over $\mathcal{M}$ (with the usual slack variables for the $\ell_1$ norm) is an SDP, and for fixed $\lambda$, minimizing $F(C,\lambda)$ over $\mathcal{C}$ is an LP.
\begin{theorem}[Monotone decrease of cost]\label{thm: algorithm converges}
Let $\{(C^{(k)}, \Lambda^{(k)})\}_{k=1}^\infty$ be the sequence generated by the above alternating minimization:
\begin{equation}
\begin{aligned}
     \Lambda^{(k+1)} &\in \arg \min _{\Lambda \in \mathcal{L}} F\left(C^{(k)}, \Lambda\right),\\ C^{(k+1)} &\in \arg \min _{C \in \mathcal{C}} F\left(C, \Lambda^{(k+1)}\right) .
\end{aligned}
\end{equation}
Then, the objective values $F^{(k)}:=F\left(C^{(k)}, \Lambda^{(k)}\right)$ form a nonincreasing sequence bounded below, hence $F^{(k)} \rightarrow F^{\star}$.
\end{theorem}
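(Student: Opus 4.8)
The plan is to prove convergence of $F^{(k)}$ by the classical argument for alternating (block coordinate) minimization: show the sequence $\{F^{(k)}\}$ is monotone nonincreasing and bounded below, then invoke the monotone convergence theorem for real sequences. The two facts to establish are therefore (i) monotonicity and (ii) boundedness below.

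For monotonicity, I would argue one full sweep at a time. Fix iteration $k$. The $\Lambda$-update sets $\Lambda^{(k+1)} \in \arg\min_{\Lambda \in \mathcal{L}} F(C^{(k)},\Lambda)$, so in particular $F(C^{(k)},\Lambda^{(k+1)}) \le F(C^{(k)},\Lambda^{(k)})$, since $\Lambda^{(k)}$ is feasible (it was produced by the previous $C$-update and lies in $\mathcal{L}$, or is the given initialization). Likewise the $C$-update gives $F(C^{(k+1)},\Lambda^{(k+1)}) \le F(C,\Lambda^{(k+1)})$ for every $C \in \mathcal{C}$, and since $C^{(k)} \in \mathcal{C}$, we get $F(C^{(k+1)},\Lambda^{(k+1)}) \le F(C^{(k)},\Lambda^{(k+1)})$. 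Chaining these two inequalities yields $F^{(k+1)} \le F^{(k)}$. The only subtlety is that each subproblem actually attains its minimum: the objective $F$ is continuous (a finite sum of $\ell_1$ norms of affine maps in the active block) and, for fixed $C$, the feasible set $\mathcal{M}$ (equivalently the collection of moment/slack constraints) is closed and bounded — $\lambda^i \in \Delta$ is compact, $\Lambda^i$ is pinned by $\operatorname{diag}(\Lambda^i)=\lambda^i$ together with the PSD constraint to a bounded set, and the slacks are squeezed between $\pm c^i_k$ — hence compact, so a minimizer exists; similarly $\mathcal{C}$ is the compact box $[-\eta,\eta]$ in each coordinate. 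So the $\arg\min$'s are nonempty and the updates are well-defined.

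For boundedness below, this is immediate: $F(C,\lambda)$ is a sum of norms, hence $F(C,\lambda) \ge 0$ for all feasible $(C,\lambda)$, so $F^{(k)} \ge 0$ for all $k$. Combining, $\{F^{(k)}\}$ is a nonincreasing sequence in $\mathbb{R}$ bounded below by $0$, so by the monotone convergence theorem it converges to some limit $F^\star \ge 0$, which is what the theorem asserts.

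The main obstacle is not conceptual but bookkeeping: one must be careful that the objective is genuinely being decreased over the \emph{same} function $F$ (depending only on the first-order moments $\lambda^i$, not on the auxiliary $\Lambda^i$), so that the two inequalities compose; and one must verify the feasible set in the $\Lambda$-subproblem is compact so that the relaxed (Shor) problem in Problem \ref{prob: mode search SDP} actually has a solution rather than merely an infimum. Both are routine given the box constraint on $C_j$ and the simplex-plus-PSD structure on $(\lambda^i,\Lambda^i)$, but they are the points where a careless argument could fail. Note that this statement only claims convergence of the cost values, not of the iterates $(C^{(k)},\Lambda^{(k)})$ themselves, so no further analysis (e.g. of limit points or stationarity) is needed here.
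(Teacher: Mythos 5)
Your proof is correct and follows essentially the same argument as the paper: chain the two block-optimality inequalities $F(C^{(k)},\Lambda^{(k+1)})\le F(C^{(k)},\Lambda^{(k)})$ and $F(C^{(k+1)},\Lambda^{(k+1)})\le F(C^{(k)},\Lambda^{(k+1)})$, then invoke monotone convergence with the lower bound $F\ge 0$. Your additional checks (attainment of the subproblem minima via compactness, and that the objective depends only on the first-order moments) are careful touches the paper glosses over, but they do not change the route.
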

\begin{proof}
    By optimality of each block step,
    \[F\left(C^{(k)}, \Lambda^{(k+1)}\right) \leq F\left(C^{(k)}, \Lambda^{(k)}\right),\]
    and
    \[F\left(C^{(k+1)}, \Lambda^{(k+1)}\right) \leq F\left(C^{(k)}, \Lambda^{(k+1)}\right) .\]
    Then $F\left(C^{(k)}, \Lambda^{(k)}\right)$ is nonincreasing thus converges to some $F^{\star}$ since $F$ is bounded below by $0$. 
\end{proof}
Theorem \ref{thm: algorithm converges} guarantees a monotone, convergent sequence of objective values, but this alone does not describe where the iterates may accumulate. To rule out cycling on a level set and to characterize admissible limits, we next show that every accumulation point is blockwise optimal (i.e., a fixed point of the two convex subproblems).
\begin{proposition}
    Let $(\bar{C}, \bar{\Lambda})$ be an accumulation point of $\{(C^{(k)}, \Lambda^{(k)})\}$. Then
    \begin{equation}
         \bar{\Lambda} \in \arg \min _{\Lambda \in \mathcal{L}} F(\bar{C}, \Lambda), \quad \bar{C} \in \arg \min _{C \in \mathcal{C}} F(C, \bar{\Lambda}).
    \end{equation}
\end{proposition}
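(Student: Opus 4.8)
The plan is to carry out the standard limit-point argument for two-block alternating minimization; its only ingredients are the joint continuity of $F$ and the compactness of the two feasible sets, so the first step is to record those. The objective $F(C,\lambda)=\sum_{i=1}^N\|\dot z^{\,i}-\sum_{j=1}^M\lambda_j^iC_j\Phi(z^i)\|_1$ is a composition of a bilinear map, an affine shift, and the $\ell_1$-norm, hence jointly continuous on $\mathcal{C}\times\mathcal{L}$. The set $\mathcal{C}$ is a box, and each per-sample set $\mathcal{L}_i$ — defined by positive semidefiniteness of the first-order moment matrix together with $\operatorname{diag}(\Lambda^i)=\lambda^i$ and $\mathbf 1^T\lambda^i=1$ — is closed and bounded, since these conditions force $\lambda^i\ge 0$ (so $\lambda^i$ lies in the simplex) and $|\Lambda^i_{jk}|\le\sqrt{\lambda^i_j\lambda^i_k}\le 1$. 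Thus $\mathcal{C}$ and $\mathcal{L}=\prod_i\mathcal{L}_i$ are compact, the iterates lie in a fixed compact set, accumulation points exist, and limits may be passed through $F$.

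Next I would squeeze a little more out of Theorem~\ref{thm: algorithm converges}. Chaining the two block inequalities gives $F^{(k+1)}=F(C^{(k+1)},\Lambda^{(k+1)})\le F(C^{(k)},\Lambda^{(k+1)})\le F(C^{(k)},\Lambda^{(k)})=F^{(k)}$, so not only does $F^{(k)}\to F^\star$, but the sandwiched half-step value also satisfies $F(C^{(k)},\Lambda^{(k+1)})\to F^\star$. Now fix an accumulation point $(\bar C,\bar\Lambda)$ and a subsequence with $(C^{(k_\ell)},\Lambda^{(k_\ell)})\to(\bar C,\bar\Lambda)$; continuity gives $F(\bar C,\bar\Lambda)=F^\star$. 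The $C$-block optimality is immediate, since the update rule already makes $C^{(k)}\in\arg\min_{C\in\mathcal{C}}F(C,\Lambda^{(k)})$: letting $\ell\to\infty$ in $F(C^{(k_\ell)},\Lambda^{(k_\ell)})\le F(C,\Lambda^{(k_\ell)})$, valid for every $C\in\mathcal{C}$, yields $F(\bar C,\bar\Lambda)\le F(C,\bar\Lambda)$, i.e. $\bar C\in\arg\min_{C\in\mathcal{C}}F(C,\bar\Lambda)$. For the $\Lambda$-block I would pass to a further subsequence along which $\Lambda^{(k_\ell+1)}\to\tilde\Lambda\in\mathcal{L}$ (possible by compactness). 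Since $\Lambda^{(k_\ell+1)}\in\arg\min_{\Lambda\in\mathcal{L}}F(C^{(k_\ell)},\Lambda)$, passing to the limit in $F(C^{(k_\ell)},\Lambda^{(k_\ell+1)})\le F(C^{(k_\ell)},\Lambda)$ gives $F(\bar C,\tilde\Lambda)\le F(\bar C,\Lambda)$ for all $\Lambda\in\mathcal{L}$, so $\tilde\Lambda$ minimizes $F(\bar C,\cdot)$; its value is $F(\bar C,\tilde\Lambda)=\lim_\ell F(C^{(k_\ell)},\Lambda^{(k_\ell+1)})=F^\star=F(\bar C,\bar\Lambda)$. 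Because $\bar\Lambda\in\mathcal{L}$ attains this minimal value, $\bar\Lambda\in\arg\min_{\Lambda\in\mathcal{L}}F(\bar C,\Lambda)$, completing the argument.

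The step I expect to be the crux is the $\Lambda$-block, precisely because the alternating scheme is asymmetric: at a pair $(C^{(k)},\Lambda^{(k)})$ the component $C^{(k)}$ is optimal for $\Lambda^{(k)}$, but $\Lambda^{(k)}$ was optimized against $C^{(k-1)}$, so $\Lambda$-optimality is not directly readable off that pair. The remedy above is to introduce the limit $\tilde\Lambda$ of the \emph{next} $\Lambda$-iterates and use the half-step sandwich to force $F(\bar C,\tilde\Lambda)=F(\bar C,\bar\Lambda)=F^\star$, which pins $\bar\Lambda$ down as a minimizer. A secondary point worth flagging is that $F$ is only jointly continuous (it is bilinear in the two blocks), not jointly convex; but the limit passage uses only continuity together with compactness, so convexity of each separate subproblem — which is what the LP/SDP structure supplies — is all that is needed, and joint nonconvexity causes no difficulty.
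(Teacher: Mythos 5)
Your proof is correct and follows the same basic route as the paper's (pass to a convergent subsequence, use continuity of $F$ and compactness of the feasible sets, and exploit block optimality of the updates), but it is noticeably more careful on the one point where the paper's argument is genuinely thin. The paper's proof observes that $\Lambda^{(k_t+1)}$ minimizes $F(C^{(k_t)},\cdot)$ and then appeals to ``closedness of $\arg\min$'' to conclude $\bar\Lambda\in\arg\min_\Lambda F(\bar C,\Lambda)$ — but that closedness argument only delivers optimality of a limit of the \emph{next} iterates $\Lambda^{(k_t+1)}$, which need not converge to $\bar\Lambda$. You identify exactly this asymmetry and repair it: by chaining the two block inequalities you show the half-step values $F(C^{(k)},\Lambda^{(k+1)})$ are sandwiched between consecutive $F^{(k)}$'s and hence also converge to $F^\star$; extracting a further subsequence $\Lambda^{(k_\ell+1)}\to\tilde\Lambda$ then shows $\min_\Lambda F(\bar C,\Lambda)=F(\bar C,\tilde\Lambda)=F^\star=F(\bar C,\bar\Lambda)$, which pins $\bar\Lambda$ down as a minimizer by value rather than by being a limit of minimizers. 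Your preliminary verification that each $\mathcal{L}_i$ is compact (PSD of the moment matrix forces $\lambda^i$ into the simplex and bounds $|\Lambda^i_{jk}|$) is also a detail the paper takes for granted but that the subsequence extraction actually relies on. In short: same skeleton, but your version closes a real gap in the published argument.
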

\begin{proof}
    Take a convergent subsequence $C^{\left(k_t\right)} \to \bar{C},\,\Lambda^{(k_t)} \to\bar{\Lambda}$. By definition of the updates, $\Lambda^{(k_t+1)}$ minimizes $F\left(C^{\left(k_t\right)}, \cdot\right)$. Closedness of $\arg \min$ for convex LPs and continuity of $F$ yield $\bar{\Lambda} \in \arg \min _{\Lambda} F(\bar{C}, \Lambda)$. The other block is analogous.
\end{proof}
\subsection{Recovery of Switching Surfaces}
The identification step above produces pointwise mode assignments (hardened $\lambda^i$) at samples $\{z^i\}_{i=1}^N$, but not global switching sets. We therefore fit switching surfaces as zero level sets of polynomials
\begin{equation}\label{eq: monomial representation}
f_\ell(x)=a_\ell^T \phi_d(x),\quad a_\ell\in\mathbb{R}^P.
\end{equation}
We encode all $M$ modes via $L$ polynomial surfaces whose signs partition $\mathbb{R}^n$ into $2^L$ regions. Choose a ``mode-book'' $\{s_j\}_{j=1}^M\subset\{-1,+1\}^L$ (i.e., each $s_j$ is a vector of length $L$ with coefficients $\pm1$, and $L$ is minimal with $M\le 2^L$). Define mode regions as
\begin{equation}\label{eq: mode region}
\mathcal{R}_j:=\left\{x: \operatorname{sign}(f_\ell(x))=(s_j)_\ell, \ \forall \ell\right\}.
\end{equation}

We then recover the surfaces via the following LP:
\begin{problem}\label{prob: surface fitting}
To each sampled state $z^i$ with identified $\lambda^i$, we associate a mode $\alpha_i \in \{1,\ldots,M\}$ such that $z^i \in \mathcal{R}_{\alpha_i}$. Set $\sigma_{i,\ell}:=(s_{\alpha_i})_\ell$ and solve
\begin{equation}\label{eq: surface recovery}
\begin{aligned}
\min_{a_\ell,\xi_{i,\ell}} \quad & \sum_{i=1}^N\sum_{\ell=1}^L \xi_{i,\ell}+\beta\sum_{\ell=1}^L \zeta_{\ell,m}\\
\text{s.t.}\quad & \sigma_{i,\ell}\, f_\ell(z^i) \ge \varepsilon-\xi_{i,\ell},\quad \forall i, \ell,\\
&\xi_{i,\ell}\ge 0, \quad \forall i, \ell,\\
&\zeta_{\ell,m} \ge (a_\ell)_m, \quad\zeta_{\ell,m} \ge -(a_\ell)_m,\\
-&\eta \le (a_\ell)_m \le \eta,\quad \forall\,i,\ell,m,
\end{aligned}
\end{equation}
with margin $\varepsilon>0$, sparsity $\beta\ge 0$, and box bound $\eta>0$.
\end{problem}

Notice that this program is directly analogous to a soft-margin SVM. The margin constraint 
enforces correct sign separation up to slack $\xi_{i,\ell}$, while the 
$\ell_1$ penalty $\beta \sum_\ell \|a_\ell\|_1$ promotes less complicated polynomials. The main difference is that instead of linear hyperplanes, we use linear-in-parameters polynomial features $\phi_d(x)$ to approximte nonlinear switching boundaries.
\begin{theorem}[Existence of a margin $\varepsilon>0$] \label{thm: eps exists}
Suppose there exist coefficients $\{\hat a_\ell\}_{\ell=1}^L$ with $|(\hat a_\ell)_m|\le \eta$ and some  $t>0$ such that 
\begin{equation}
\sigma_{i,\ell}\,\hat a_\ell^T \phi_d(z^i) \ge t,\quad \forall i=1,\ldots,N,\, \ell=1,\ldots,L .
\end{equation}
Let $S:=\sum_{\ell=1}^L\left\|\hat{a}_{\ell}\right\|_1$. If $\beta S<N L t$, then any choice of $\varepsilon$ satisfying
\begin{equation}
\frac{\beta S}{N L}<\varepsilon \leq t
\end{equation}
guarantees that Problem \ref{prob: surface fitting} admits an optimal solution with $\xi_{i, \ell}=0$ for all $i, \ell$, and $a_{\ell} \neq 0$.
\end{theorem}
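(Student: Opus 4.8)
The plan is to show that, for $\varepsilon$ in the stated window, the soft-margin program \eqref{eq: surface recovery} has the same optimal value as its hard-margin version (the same program with every slack frozen to $0$), and then to observe that a zero-slack optimum cannot be degenerate. First I would rescale the certificate: put $\bar a_\ell := (\varepsilon/t)\,\hat a_\ell$. Since $\varepsilon\le t$ the box bound is preserved, $|(\bar a_\ell)_m|\le |(\hat a_\ell)_m|\le\eta$, and $\sigma_{i,\ell}\,\bar a_\ell^T\phi_d(z^i) = (\varepsilon/t)\,\sigma_{i,\ell}\,\hat a_\ell^T\phi_d(z^i)\ge(\varepsilon/t)\,t=\varepsilon$, so $(\bar a,\,0)$ is feasible. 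Hence the feasible set is nonempty; after eliminating the slacks through $\xi_{i,\ell} = (\varepsilon-\sigma_{i,\ell}f_\ell(z^i))_+$ the objective is a continuous (piecewise-linear) function of the $a_\ell$, which range over a compact box, so a minimizer $(a^\star,\xi^\star)$ exists, with optimal value $p^\star\le\beta\sum_\ell\|\bar a_\ell\|_1 = \beta(\varepsilon/t)S$. Together with $\beta S<NLt$ this already gives $p^\star<NL\varepsilon$, so the trivial classifier $a\equiv 0$, whose value is exactly $NL\varepsilon$, is strictly suboptimal.

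Next I would upgrade this to $\xi^\star\equiv 0$. Problem~\ref{prob: surface fitting} separates over $\ell$, so I would fix an $\ell$ with $\xi^\star_{\max,\ell}:=\max_i\xi^\star_{i,\ell}>0$ and slide along the segment $a_\ell(s):=(1-s)\,a^\star_\ell + s\,\hat a_\ell$, which stays inside the box for $s\in[0,1]$. For every sample, $\sigma_{i,\ell}\,a_\ell(s)^T\phi_d(z^i)\ge(1-s)(\varepsilon-\xi^\star_{i,\ell}) + s\,t$, so the optimal slack at $a_\ell(s)$ is at most $\left((1-s)\,\xi^\star_{i,\ell}-s(t-\varepsilon)\right)_+$; choosing $s=s_\ell:=\xi^\star_{\max,\ell}\,/\,\big((t-\varepsilon)+\xi^\star_{\max,\ell}\big)\in(0,1]$ kills every surface-$\ell$ slack at once. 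Convexity of $\|\cdot\|_1$ bounds the induced increase of the $\ell_1$ penalty by $s_\ell\,S_\ell$ (with $S_\ell:=\|\hat a_\ell\|_1$), while the slack removed is $\sum_i\xi^\star_{i,\ell}$; the window $\beta S/(NL)<\varepsilon\le t$ (equivalently $\beta S<NLt$) is what makes the net change of the objective nonpositive, so $a^\star_\ell$ may be replaced by $a_\ell(s_\ell)$. Repeating over all $\ell$ produces an optimal solution with $\xi\equiv 0$; in that solution each constraint reads $\sigma_{i,\ell}\,a_\ell^T\phi_d(z^i)\ge\varepsilon>0$, which fails at any sample if $a_\ell=0$, so (as $N\ge 1$) every $a_\ell\ne 0$, completing the proof.

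The step I expect to be the real obstacle is the quantitative accounting in the second part: one must control precisely how much the $\ell_1$ regularizer grows as one slides toward $\hat a_\ell$ relative to the amount of slack that is eliminated, carefully tracking the per-surface share of $S=\sum_\ell\|\hat a_\ell\|_1$ and the number of samples violating the margin on each surface. This is exactly where the bound $\beta S<NLt$ and the lower endpoint $\varepsilon>\beta S/(NL)$ enter, and the boundary case $\varepsilon=t$ (in which $s_\ell=1$, i.e., one slides all the way to $\hat a_\ell$) has to be checked separately.
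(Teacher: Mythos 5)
Your first paragraph reproduces, with a minor improvement (rescaling the certificate to $(\varepsilon/t)\hat a_\ell$), exactly what the paper's proof does: exhibit the zero-slack feasible point built from the certificate, compare its value $\beta(\varepsilon/t)S\le\beta S$ against the value $NL\varepsilon$ of the trivial classifier, and conclude from $\beta S<NL\varepsilon$ that $a\equiv 0$ is not optimal. That part is sound (modulo the fact that, to get \emph{each} $a_\ell\neq 0$ rather than ``not all $a_\ell$ are zero,'' one should run the comparison per surface $\ell$, which needs $\beta\|\hat a_\ell\|_1<N\varepsilon$ for each $\ell$ rather than only the aggregate inequality).

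The genuine gap is in your second paragraph, precisely at the step you flagged. After sliding to $a_\ell(s_\ell)$ the net change in the $\ell$-block of the objective is bounded by $s_\ell\bigl(\beta\|\hat a_\ell\|_1-\sum_i\xi^\star_{i,\ell}\bigr)$, so you need $\beta\|\hat a_\ell\|_1\le\sum_i\xi^\star_{i,\ell}$ at any optimum with positive slack. Nothing in the hypothesis $\beta S<NLt$ delivers this: that is a single aggregate inequality comparing the certificate's size to $NLt$, whereas what you need is a pointwise bound on the marginal trade-off between shrinking slack and growing the $\ell_1$ penalty at an arbitrary optimum, and the two are unrelated when the residual slack is small but the certificate is large. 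Indeed the zero-slack claim itself can fail under the stated hypotheses: with $\phi_1(x)=(1,x)$, take one ``hard'' pair of oppositely labeled samples at $x=0$ and $x=\delta$ (forcing $\|a\|_1\gtrsim 2\varepsilon/\delta$ for zero slack) together with $N-2$ easy samples; then $\beta S<NLt$ holds for any $\beta<O(N\delta)$, yet whenever $\beta>\delta$ every optimal solution keeps slack $\approx 2t$ on the hard sample because eliminating it costs $\beta\cdot 2t/\delta>2t$ in penalty. So no choice of $s_\ell$ (nor any other argument) can complete this step without strengthening the hypotheses, e.g., imposing a per-sample, per-unit-of-$\|a_\ell\|_1$ margin condition. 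For what it is worth, the paper's own proof does not attempt this step at all --- it stops after showing the zero-slack certificate point is feasible and beats $a\equiv 0$ --- so your instinct that the zero-slack-at-optimum claim requires a separate argument is correct; the argument is simply not available from the stated assumptions.
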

\begin{proof} 
Since $\varepsilon\le t$ and $\sigma_{i,\ell}\,\hat a_\ell^T \phi_d(z^i)\ge \varepsilon$, the point $(\hat a_\ell,\xi_{i,\ell}= 0)$ is feasible. Thus, the best objective value by $\hat a_\ell$ is $\beta S$. Suppose we choose $a_\ell= 0$, then $\xi_{i,\ell}\ge \varepsilon$. Hence, the best solution in this case has objective value $NL\varepsilon$. Since $\varepsilon>\beta S/N L$, we have $\beta S < N L\varepsilon$. Therefore, the feasible point $(\hat a_\ell, \xi_{i,\ell} = 0)$ strictly improves the zero solution, so any optimizer $a^\star_\ell$ must be nonzero.
\end{proof}
\begin{remark}
    Note that we can obtain such a certificate $\hat a_\ell$ and margin $t > 0$ by solving the following LP
    \begin{equation}
        \max _{t,a_{\ell}}\ t \quad \text {s.t.} \quad \sigma_{i, \ell} a_{\ell}^T \phi_d(z^i) \geq t, \quad |(a_{\ell})_m| \leq \eta.
    \end{equation}
    Moreover, if $\beta=0$, any $\varepsilon \in(0, t]$ prevents the trivial solution and yields zero slacks with a nonzero $a_\ell$.
\end{remark}
\section{Examples}
We provide numerical examples, one for SLSs and another for SPSs, using MOSEK \cite{mosek2025} to solve convex optimizations. We compare the first-order moment and the simplex relaxations for the mode identification step in problem \ref{prob: mode search SDP}.
\subsection{SLS identification: Switching damped oscillator}
Consider a damped harmonic oscillator with its state vector $z = (x, \dot{x})^T\in \mathbb{R}^2$. The state follows the dynamics
    \[\dot{z}(t) = \begin{bmatrix}
        \dot{x}(t)\\ \ddot{x}(t)
    \end{bmatrix} = \begin{bmatrix}
        0 & 1\\ -\omega^2 & -\gamma
    \end{bmatrix}\begin{bmatrix}
        x(t) \\ \dot{x}(t)
    \end{bmatrix} = Az(t),\]
    where $\omega$ is its angular frequency and $\gamma$ is the damping coefficient. Suppose that the surface characteristic changes depending on the position of the oscillator. We can model this by assigning different damping coefficients depending on the position, say across the line $L =\{z \in \mathbb{R}^2: x = 0\}$, so the system undergoes switching. Now, define the SLS with 
    \[A_1 = \begin{bmatrix}
        0 & 1\\-\omega^2 & -\gamma_1
    \end{bmatrix}, \quad A_2 = \begin{bmatrix}
        0 & 1\\ -\omega^2 & -\gamma_2
    \end{bmatrix}\]
    where $\gamma_1$ and $\gamma_2$ are distinct damping coefficients. 

    The coefficients in this example are $\omega = 1$, $\gamma_1 = 0.1, \gamma_2 = 0.5$. The initial guesses for $A_1$ and $A_2$ are both identity matrices, and the batch size is $2000$. Figure \ref{fig: SLS LP SDP comparison} shows the convergence of the algorithm, the number of mode mismatch at each iteration, and the proportion of moment matrices satisfying the Curto-Fialkow rank-1 condition, which increases steadily, indicating progressively tighter relaxations and near-integral mode assignments. Over 10 iterations, the moment relaxation required a total of $699.99 s$ (median $70.95 s$/iteration), whereas the simplex relaxation required $59.34 s$ in total (median $5.88 s$/iteration), highlighting the order-of-magnitude runtime advantage of LP.
    \begin{figure}[!ht]
    \centering
    \small
    \setlength{\tabcolsep}{0pt}
    \begin{tabular}{@{}cc@{}}
    \includegraphics[width=.5\linewidth]{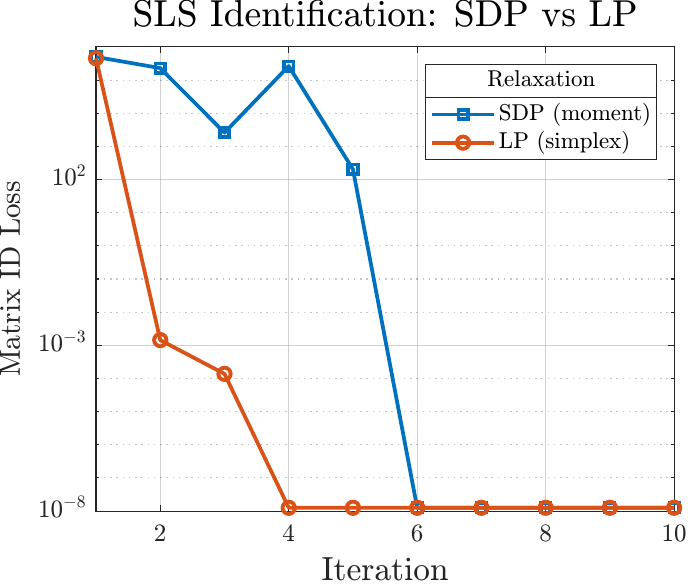} &
    \includegraphics[width=.5\linewidth]{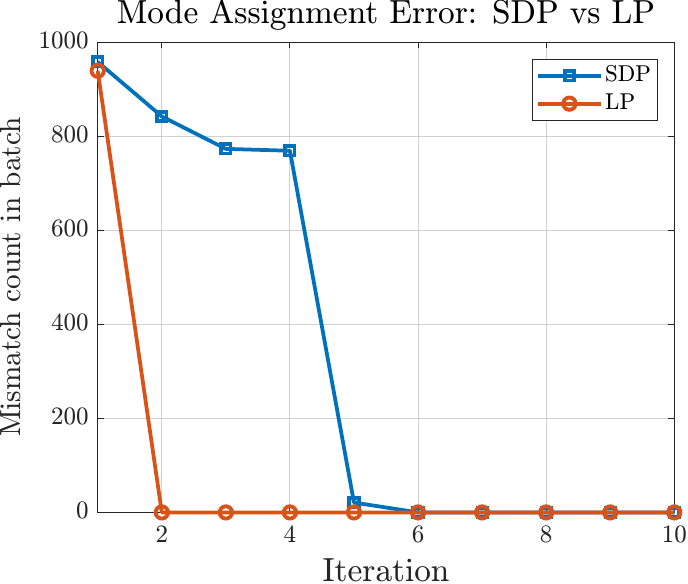} \\
    \includegraphics[width=.45\linewidth]{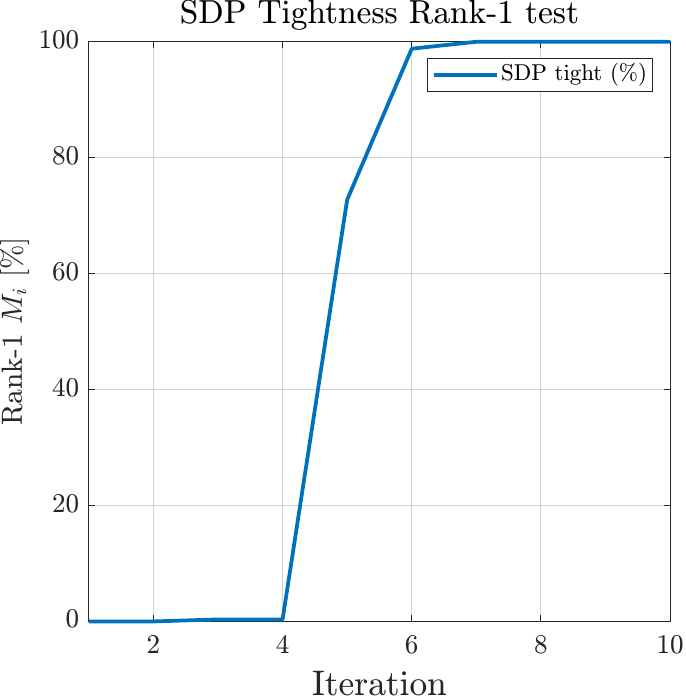} &
    \includegraphics[width=.45\linewidth]{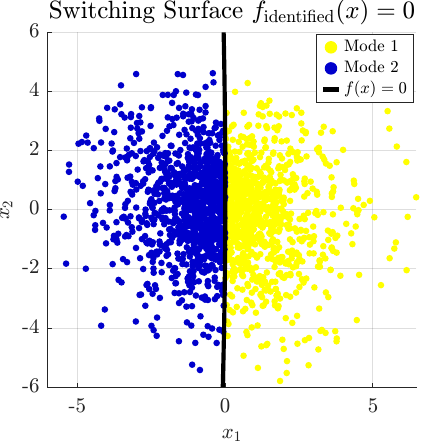}
    \end{tabular}
    \caption{Comparison of convex relaxations for switching linear system identification. Top left: identification cost for moment and simplex relaxations. Top right: mode assignment mismatch. Bottom left: SDP tightness ratio across relaxations. Bottom right: recovered switching surface.}
    \label{fig: SLS LP SDP comparison}
    \end{figure}
    After solving for $\lambda_i$, we solve the LP in Problem \ref{prob: surface fitting} to recover the switching surface. As in Theorem \ref{thm: eps exists}, we chose the parameters $d = 2$, $\eta = 10$, $\varepsilon = 10^{-2}$, $\beta = 10^{-2}$. The identified polynomial
    \[f_{\text{identified}}(x,y) = -0.0001 + 2.3837x + 0.0026y + 0.0026x^2\]
    matches well with the correct polynomial $f_{\text{correct}}(x,y) = x$ (the zero level set $f^{-1}(0)$ is unchanged up to scalar multiples) as in Fig~\ref{fig: SLS LP SDP comparison}, which shows the recovered mode labels at each sampled point and the plot of the identified surface. 
    \begin{figure}[!ht]
    \centering
    \small
    \setlength{\tabcolsep}{0pt}
    \begin{tabular}{@{}cc@{}}
    \includegraphics[width=.48\linewidth]{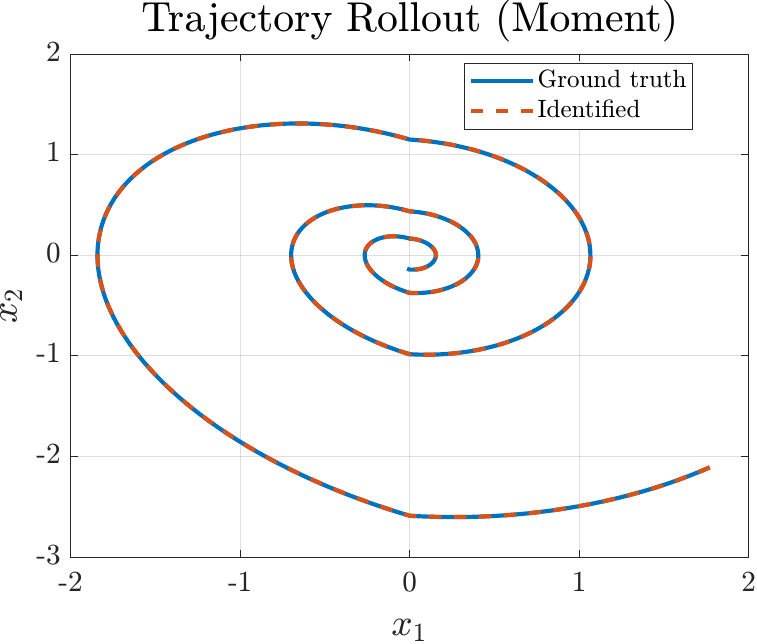} &
    \includegraphics[width=.48\linewidth]{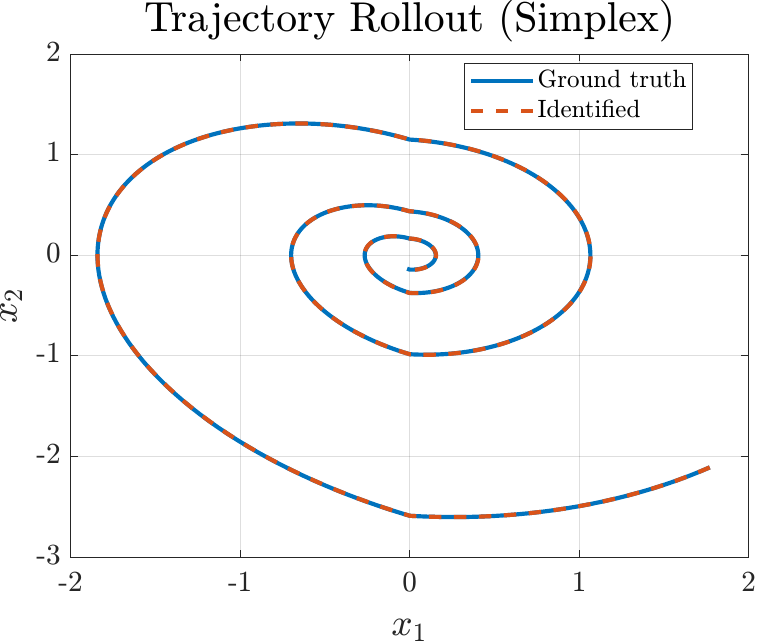} \\
    \includegraphics[width=.48\linewidth]{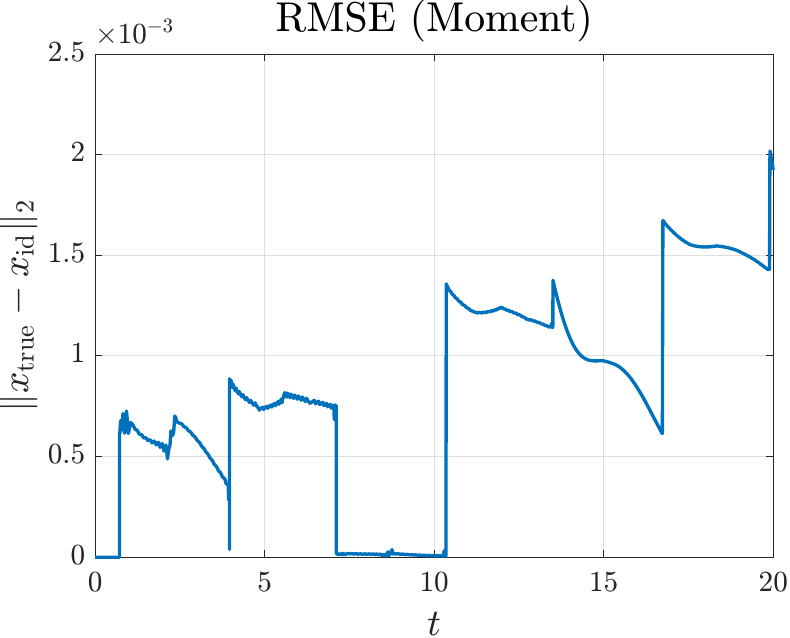} &
    \includegraphics[width=.48\linewidth]{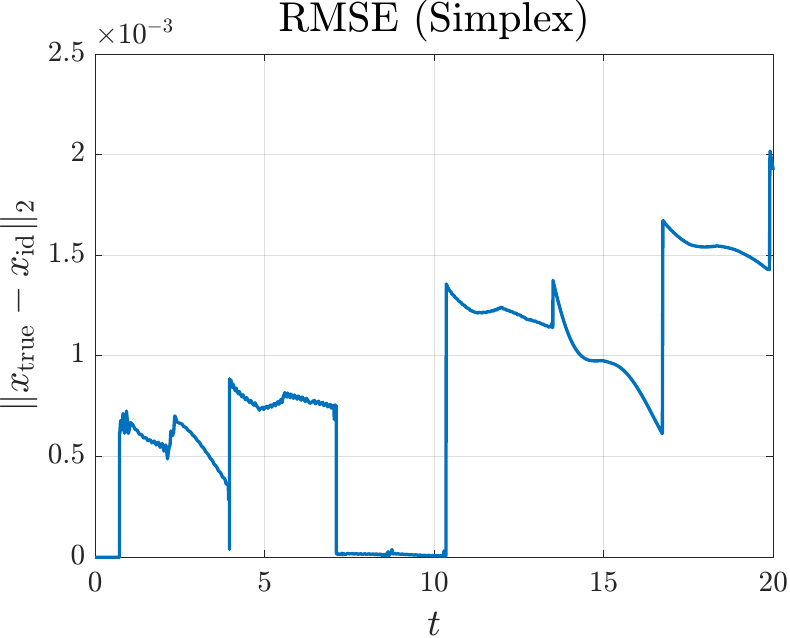}
    \end{tabular}
    \caption{Trajectory rollouts (top) and error evolution (bottom) for the identified switching linear system. Left column: moment relaxation. Right column: simplex relaxation. The rollout plots compare identified trajectories (solid) using the identified switching system with ground truth (dashed), and the error plots show the corresponding RMSE over time.}
    \label{fig: SLS LP SDP rollout RMSE}
    \end{figure}
    
    We evaluate the identified dynamics in two complementary ways. First, we assess \emph{pointwise} accuracy on a disjoint (from the training set) test set of state-velocity pairs, measuring the regression error (RMSE) and classification quality (accuracy and mIoU) of the learned switching surfaces. Second, we assess \emph{trajectory-level} accuracy by simulating the identified models on test initial conditions and comparing against ground-truth rollouts. Figure~\ref{fig: SLS LP SDP rollout RMSE} shows representative rollouts and RMSE profiles over time, while Table~\ref{tab: SLS_metrics} reports quantitative metrics aggregated over both the static test set and dynamic rollouts. Rollout RMSE measures the average trajectory discrepancy over time across $12$ different trajectories, while final and max error capture the deviation at the endpoint and the worst-case deviation along the trajectory.
    \begin{table}[!ht]
      \centering
      \renewcommand{\arraystretch}{0.8}
      {%
        \small
        \begin{tabular}{lrr}
          \toprule
          \textbf{Metric (pointwise test)} & \textbf{LP} & \textbf{SDP} \\
          \midrule
          Velocity RMSE & 0.0394 & 0.0394 \\
          Mode Accuracy & 0.9995 & 0.9995 \\
          mIoU          & 0.9990 & 0.9990 \\
          \midrule
          \textbf{Metric (trajectory rollout)} & & \\
          \midrule
          Rollout RMSE  & 0.0010 & 0.0010 \\
          Final error   & 0.0019 & 0.0019 \\
          Max error     & 0.0020 & 0.0020 \\
          \bottomrule
        \end{tabular}%
      }
    \caption{Quantitative evaluation of identified switching linear systems. 
    Top: pointwise regression and classification metrics on disjoint test data. 
    Bottom: trajectory-level errors and mode recovery quality on simulated rollouts.}
    \label{tab: SLS_metrics}
    \end{table}
\subsection{SPS identification: Switching quartic oscillator}
Consider a quartic potential of the form (see e.g., \cite{PhysRevD.110.123511})
    \begin{equation}
    V(x) = \frac{\omega^2}{2}x^2 + \frac{\lambda}{4}x^4,
    \end{equation}
    which gives an equation of a quartic oscillator.
    The energy of the quartic oscillator system is defined as
    \begin{equation}
        E(x, \dot x) = \frac{1}{2}\dot x^2 + \frac{\omega^2}{2}x^2 + \frac{\lambda}{4} x^4.
    \end{equation}
    We introduce nonlinear damping $D$ and forcing $F$ terms to the quartic oscillator such that the dynamics becomes
    \begin{equation}
    \ddot{x} + \omega^2 x + \lambda x^3 + D(x,\dot{x}) = F(x,\dot{x}).
    \end{equation}
    Let us suppose that $D$ and $F$ are both polynomials (or approximated by some polynomials) up to 4th order. The state $z = (x, \dot{x})^T$ follows a nonlinear system
    \begin{equation}
    \dot{z}(t) = \begin{bmatrix}
        \dot{x}\\ \ddot{x} 
    \end{bmatrix} = \begin{bmatrix}
        \dot{x}\\ -\omega^2 x - \lambda x^3 - D(x,\dot{x}) + F(x,\dot{x})
    \end{bmatrix}.
    \end{equation}
    Denote $y := \dot{x}$, then to identify the above dynamics, we model it as a polynomial system
    \begin{equation}
        \dot{z} = \begin{bmatrix}
            \dot{x} \\ \dot y
        \end{bmatrix} = \begin{bmatrix}
            a^T \phi_d(x,y)\\ b^T\phi_d(x,y)
        \end{bmatrix} = H(x,y),
    \end{equation}
    where $\phi_d(x,y)$ is the monomial basis as introduced in \eqref{eq: monomial representation}, and $a, b \in \mathbb{R}^P$ are the coefficient vectors. Suppose that the dynamics changes when the system crosses some energy threshold $E_c$, that is, $\dot z = H_1(x,y)$ if $E(x,y) \le E_c$ and $\dot z =H_2(x,y)$ if $E(x,y) > E_c$, where each mode has a different polynomial system.
    In our numerical example, we chose the parameters of the system to be $\omega = 1$ and $\lambda = 0.3$. We chose the damping and the forcing terms in each mode so that the polynomial vector fields have the form
    \begin{equation}
    \begin{aligned}
        H_1(x,y) &= \begin{bmatrix}
            y\\ -\omega^2 x - \lambda x^3 - 0.1 y^3 - 0.1 + 0.2x^2 + 0.3xy
        \end{bmatrix},\\
        H_2(x,y) &= \begin{bmatrix}
            y\\ -\omega^2 x - \lambda x^3 - 0.1y^3 + 0.1xy + 0.1x^2 - 0.1y 
        \end{bmatrix}.
    \end{aligned}
    \end{equation}
    The energy threshold is chosen to be $E_c = 3.0$, making the switching surface defined as $S = f^{-1}(0)$ by the polynomial
    \begin{equation}
    f(x,y) = E(x,y) - E_c = \frac{1}{2}y^2 + \frac{\omega^2}{2}x^2 + \frac{\lambda}{4}x^4 - 3.0
    \end{equation}
    Figure \ref{fig: SLS LP SDP comparison} shows the convergence of the algorithm for this switching polynomial system identification and the number of mode mismatches with batch size $2000$.
    \begin{figure}[!ht]
    \centering
    \small
    \setlength{\tabcolsep}{0pt}
    \begin{tabular}{@{}cc@{}}
    \includegraphics[width=.5\linewidth]{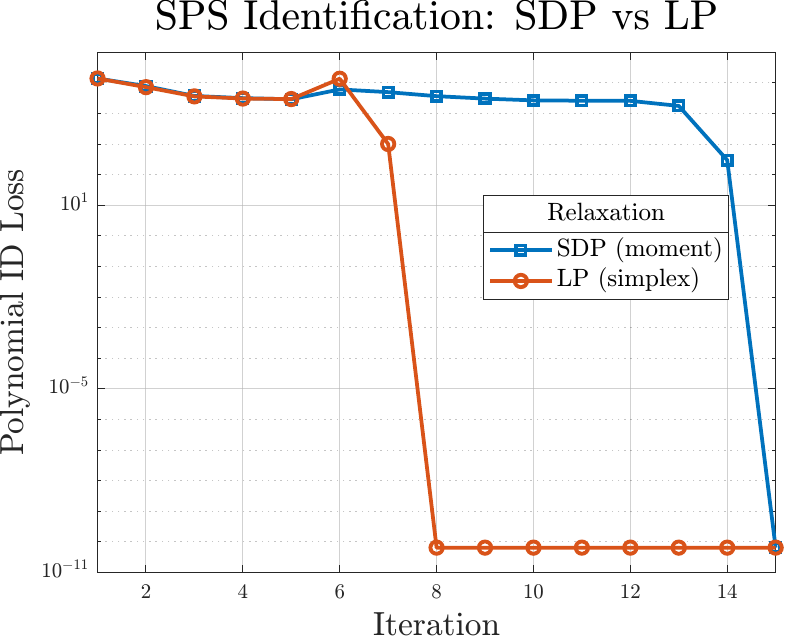} &
    \includegraphics[width=.5\linewidth]{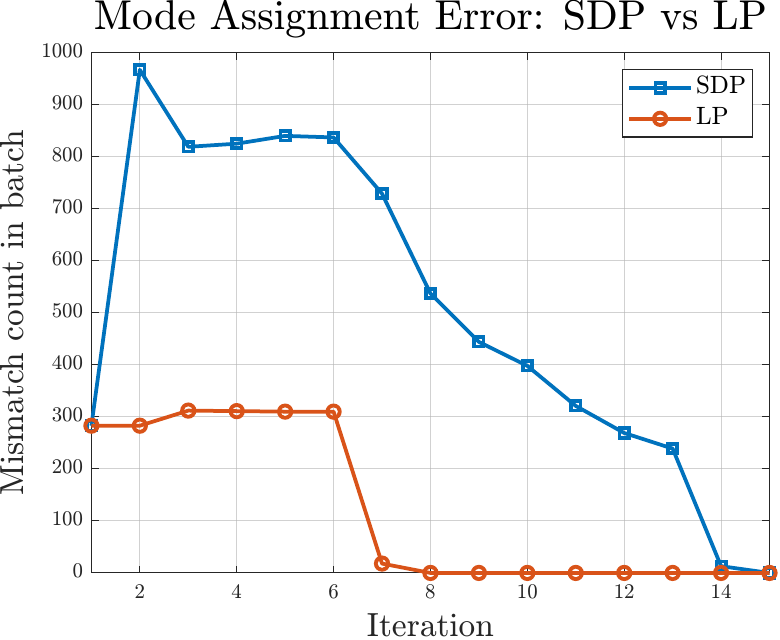} \\
    \includegraphics[width=.5\linewidth]{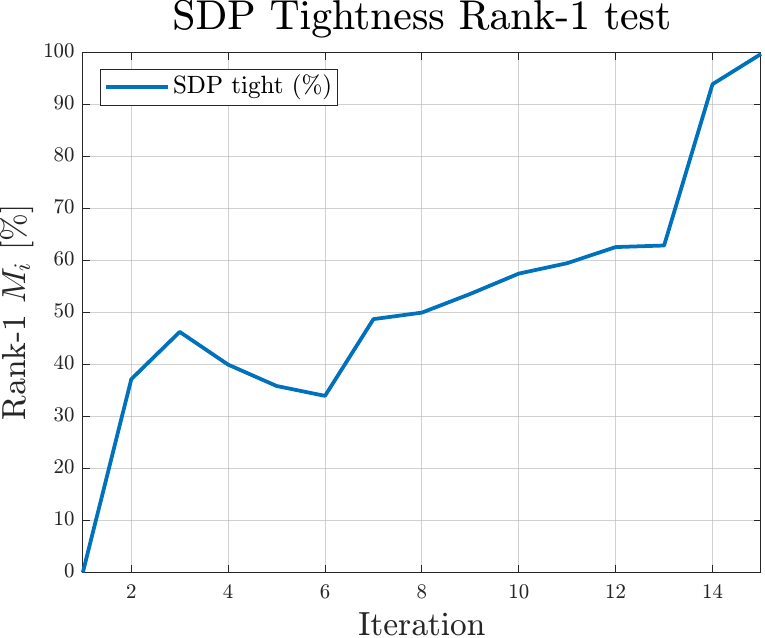} &
    \includegraphics[width=.45\linewidth]{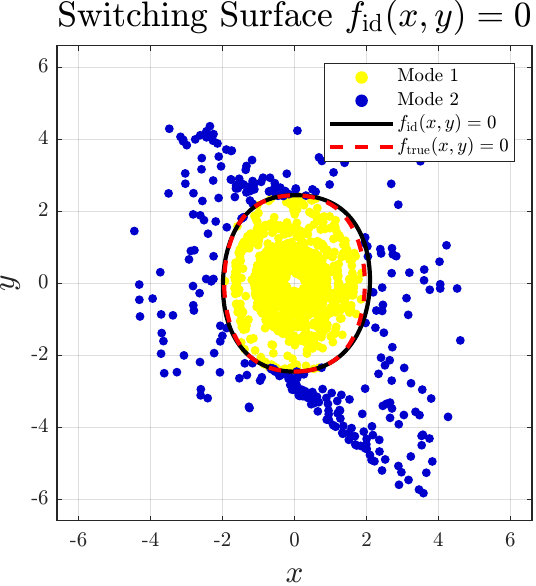}
    \end{tabular}
    \caption{Comparison of convex relaxations for switching polynomial system identification. Top left: identification cost for moment and simplex relaxations. Top right: mode assignment mismatch. Bottom left: SDP rank-1 tightness ratio across relaxations. Bottom right: comparison of the identified switching surface and the ground truth.}
    \label{fig: SPS LP SDP comparison}
    \end{figure}
    
    Over $15$ iterations, the moment relaxation required a total of $1168.608 s$ (median $76.313 s$/iteration), whereas the simplex relaxation required $275.296 s$ in total (median $16.995 s$/iteration).
    As we did in the SLS, we can recover the switching surface from the identified modes. We chose the parameters as $d = 4$, $\eta = 10$, $\varepsilon = 10^{-2}$, and $\beta = 10^{-2}$. The fitted polynomial switching surface
    $f_{\text{identified}}^{-1}(0)$ aligns well with the ground truth surface $f_{\text{true}}^{-1}(0) = f^{-1}(0)$ as can be seen in Figure \ref{fig: SPS LP SDP comparison}. The same set of testing evaluations for this SPS case is summarized in Figure \ref{fig: SPS LP SDP rollouts RMSE} and Table \ref{tab: SPS_metrics}.
    \begin{figure}[!ht]
    \centering
    \small
    \setlength{\tabcolsep}{0pt}
    \begin{tabular}{@{}cc@{}}
    \includegraphics[width=.47\linewidth]{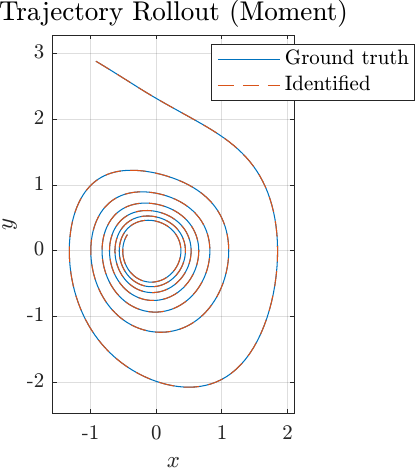} &
    \includegraphics[width=.47\linewidth]{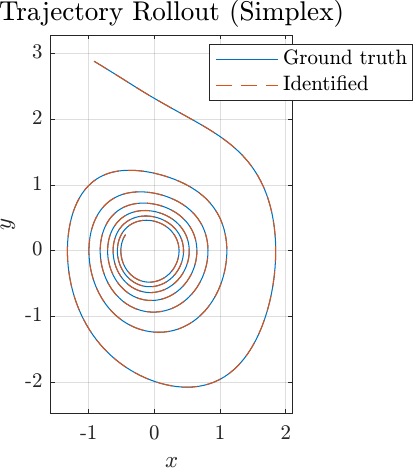} \\
    \includegraphics[width=.47\linewidth]{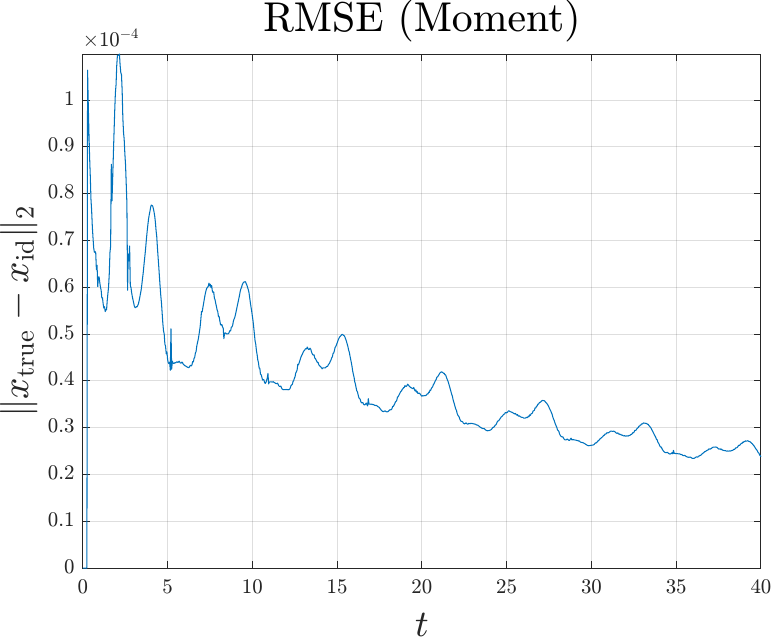} &
    \includegraphics[width=.47\linewidth]{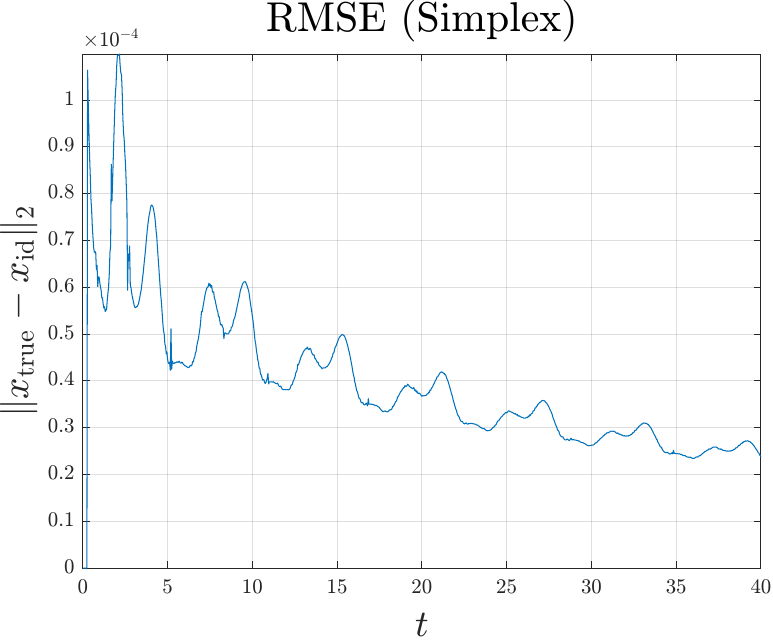}
    \end{tabular}
    \caption{Representative trajectory rollouts (top) and error evolutions (bottom) for the identified switching polynomial system.}
    \label{fig: SPS LP SDP rollouts RMSE}
    \end{figure}
    \begin{table}[!ht]
      \centering
      \renewcommand{\arraystretch}{0.8}
      {%
        \small
        \begin{tabular}{lrr}
          \toprule
          \textbf{Metric (pointwise test)} & \textbf{LP} & \textbf{SDP} \\
          \midrule
          Velocity RMSE & 0.0225 & 0.0225 \\
          Mode Accuracy & 0.9982 & 0.9982 \\
          mIoU          & 0.9929 & 0.9929 \\
          \midrule
          \textbf{Metric (trajectory rollout)} & & \\
          \midrule
          Rollout RMSE  & $0.0060$ & $0.0060$ \\
          Final error   & $0.0022$ & $0.0022$ \\
          Max error     & $0.0206$ & $0.0206$ \\
          \bottomrule
        \end{tabular}%
      }
    \caption{Quantitative evaluation of the identified switching polynomial system.}
    \label{tab: SPS_metrics}
    \end{table}
\section{Conclusions and future work}
In this work, we develop a method for switching system identification using convex relaxation of the original mixed integer program. One limitation is that due to the offline search of switching surfaces, it is hard to identify systems with time-varying or mode-dependent guard sets. Therefore, in future work, we aim to extend this work to the online search as can be seen in \cite{mavridis2024real} for a more general class of switching and hybrid system identification.




\section*{Acknowledgements} Supported in part by NSF grant 2103026, and AFOSR
grants FA9550-32-1-0215 and FA9550-23-1-0400 (MURI).


{\footnotesize 
\bibliographystyle{IEEEtran}
\bibliography{bib/strings-abrv,bib/ieee-abrv,bib/references}
}

\end{document}